\newtheorem{theorem}{Theorem}[section]
\newtheorem{lemma}[theorem]{Lemma}
\theoremstyle{definition}
\theoremstyle{remark}
\numberwithin{equation}{section}
\newcommand{\mmod}[1]{\,\,(\text{mod}\,\,#1)}
\def\bfa{{\mathbf a}}
\def\bfc{{\mathbf c}}
\def\bfx{{\mathbf x}}
\def\bfy{{\mathbf y}}
\def\bfz{{\mathbf z}}
\def\calC{{\mathscr C}}
\def\calI{{\mathscr I}}
 \def\Ktil{{\widetilde K}}
\def\Ktil{\widetilde K}
\def\dbN{{\mathbb N}}
\def\dbR{{\mathbb R}}
\def\dbZ{{\mathbb Z}}\def\dbQ{{\mathbb Q}}
\def\grA{{\mathfrak A}}
\def\grB{{\mathfrak B}}
\def\grf{{\mathfrak f}}\def\grF{{\mathfrak F}}
\def\grJ{{\mathfrak J}}
\def\grk{{\mathfrak k}}
\def\grL{{\mathfrak L}}
\def\grm{{\mathfrak m}}\def\grM{{\mathfrak M}}\def\grN{{\mathfrak N}}
\def\grn{{\mathfrak n}}\def\grS{{\mathfrak S}}\def\grP{{\mathfrak P}}
\def\grW{{\mathfrak W}}\def\grB{{\mathfrak B}}
\def\grK{{\mathfrak K}}\def\grL{{\mathfrak L}}\def\grp{{\mathfrak p}}
\def\grT{{\mathfrak T}}
\def\grW{{\mathfrak W}}
\def\alp{{\alpha}} \def\bfalp{{\boldsymbol \alpha}} 
\def\bet{{\beta}}  \def\bfbet{{\boldsymbol \beta}}
\def\gam{{\gamma}}  \def\bfgam{{\boldsymbol \gamma}}
\def\bfgamtil{{\widetilde \bfgam}} \def\gamtil{{\widetilde \gam}}
\def\del{{\delta}} \def\Del{{\Delta}}
\def\tet{{\theta}} \def\bftet{{\boldsymbol \theta}} 
 \def\Lam{{\Lambda}}
\def\sig{{\sigma}}  
\def\Ups{{\Upsilon}} 
 \def\Ome{{\Omega}} 
\def\d{{\partial}}
\def\eps{\varepsilon}
\def\le{\leqslant} \def\ge{\geqslant}
\def\d{{\,{\rm d}}}
\begin{document}
\title[Subconvexity via the circle method]{Rational lines on diagonal hypersurfaces and 
subconvexity via the circle method}
\author[Trevor D. Wooley]{Trevor D. Wooley}
\address{Department of Mathematics, Purdue University, 150 N. University Street, West 
Lafayette, IN 47907-2067, USA}
\email{twooley@purdue.edu}
\subjclass[2010]{11D45, 11D72, 11P55}
\keywords{Hardy-Littlewood method, Diophantine equations, rational lines}
\thanks{The author is supported by NSF grants DMS-1854398 and DMS-2001549}
\begin{abstract} Fix $k,s,n\in \dbN$, and consider non-zero integers $c_1,\ldots ,c_s$, not 
all of the same sign. Provided that $s\ge k(k+1)$, we establish a Hasse principle for the 
existence of lines having integral coordinates lying on the affine diagonal hypersurface 
defined by the equation $c_1x_1^k+\ldots +c_sx_s^k=n$. This conclusion surmounts the 
conventional convexity barrier tantamount to the square-root cancellation limit for this 
problem.
\end{abstract}
\maketitle

\section{Introduction} The investigation of rational linear spaces on algebraic varieties was 
pursued by Brauer \cite{Bra1945} and Birch \cite{Bir1957} as a key step in their inductive 
strategies for establishing the existence of rational points on complete intersections. This 
initial work in the middle of the last century has more recently evolved, in contributions of 
Parsell \cite{Par2000, Par2009} and Brandes \cite{Bra2014}, to encompass quantitative 
considerations. In this paper, we also investigate the abundance of rational lines, but now 
on affine diagonal hypersurfaces. By applying the Hardy-Littlewood (circle) method, we 
derive a certain Hasse principle for the existence of lines having integral coordinates lying 
on the hypersurface. A notable feature of our application is that it goes beyond the 
convexity limit of the circle method, by which we mean the square-root barrier that 
ordinarily restricts the method to problems in which the number of available variables 
exceeds twice the inherent degree.\par

In order to describe our conclusions more precisely, we must introduce some notation. We 
fix natural numbers $k\ge 2$ and $s$, and we consider the affine hypersurface defined by 
the diagonal equation
\begin{equation}\label{1.1}
c_1x_1^k+\ldots +c_sx_s^k=n,
\end{equation}
in which $c_i\in \dbZ\setminus \{0\}$ $(1\le i \le s)$ and $n\in \dbZ\setminus \{0\}$ are 
fixed. We assume, in particular, that the coefficients $c_i$ are neither all positive nor all 
negative. For each exponent $k$, a conventional application of the circle method confirms 
the existence of a positive number $s_0(k)$ having the property that the solutions of the 
equation \eqref{1.1} satisfy the weak approximation property provided only that 
$s\ge s_0(k)$. Indeed, it follows from the work and methods of earlier scholars that when 
$2\le k\le 15$ one has $s_0(k)\le t_0(k)$, where $t_0(k)$ is defined according to Table 
\ref{tab1} below (see \cite{BW2022, Est1962, Vau1989a, Vau1989b, VW1994, VW1995, 
Woo2016} for the necessary ideas). Meanwhile, recent work of the author with Br\"udern 
\cite{BW2022} may be routinely applied to confirm that 
$s_0(k)\le \lceil k(\log k+4.20032)\rceil$. Moreover, subject to real and $p$-adic solubility 
hypotheses, it follows under the same conditions that the equation \eqref{1.1} possesses an 
abundance of integral solutions in which, for each $r\ge 1$, there is no $r$-tuple 
$(i_1,i_2,\ldots ,i_r)$ of indices with $1\le i_1<i_2<\ldots <i_r\le s$ for which
\[
c_{i_1}x_{i_1}^k+c_{i_2}x_{i_2}^k+\ldots +c_{i_r}x_{i_r}^k=0.
\]
Henceforth, we refer to the latter as the condition that there be {\it no vanishing subsums}, 
and we note in particular that it implies that no variable $x_i$ is equal to $0$.\par

\begin{table}[h]
\begin{center}
\begin{tabular}{ccccccccccccccccc}
\toprule
$k$ & $2$ & $3$ & $4$ & $5$ & $6$ & $7$ & $8$ & $9$ & $10$ & $11$ & $12$ & $13$ & 
$14$ & $15$\\
$t_0(k)$ & $4$ & $7$ & $12$ & $17$ & $24$ & $31$ & $39$ & $47$ & $55$ & $63$ & 
$72$ & $81$ & $89$ & $97$ \\
\bottomrule
\end{tabular}\\[6pt]
\end{center}
\caption{Upper bounds for $s_0(k)$ when $2\le k\le 15$.}\label{tab1}
\end{table}

Our interest in this paper lies with the existence of linear solution spaces of the equation 
\eqref{1.1} of the shape $\bfx=\bfy+t\bfz$, with $\bfy\in (\dbZ\setminus \{0\})^s$ and 
$\bfz\in \dbZ^s\setminus\{ {\bf 0}\}$. Subject to local solubility conditions and the 
hypothesis $s\ge s_0(k)$, it follows from the above discussion that there exists an $s$-tuple 
$\bfy\in (\dbZ\setminus \{0\})^s$ satisfying the equation
\begin{equation}\label{1.2}
c_1y_1^k+\ldots +c_sy_s^k=n.
\end{equation}
With this solution fixed, we denote by $N_{s,k}(B;\bfy)$ the number of integral $s$-tuples 
$\bfz\in \dbZ^s\cap [-B,B]^s$ for which the equation
\begin{equation}\label{1.3}
c_1(y_1+tz_1)^k+\ldots +c_s(y_s+tz_s)^k=n
\end{equation}
holds identically as a polynomial in $t$. By expanding the powers in \eqref{1.3} via the 
binomial theorem, and recalling \eqref{1.2}, one sees that the condition on these $s$-tuples 
is equivalent to insisting that $\bfz$ satisfy the system of equations
\begin{equation}\label{1.4}
\sum_{i=1}^sc_iy_i^{k-j}z_i^j=0\quad (1\le j\le k).
\end{equation}
We choose in this paper to focus on the situation with $k\ge 3$. The situation with $k=1$ is 
a matter for linear algebra, while that with $k=2$ is accessible to the theory of quadratic 
forms. Indeed, by eliminating a variable between the linear and quadratic equations in 
\eqref{1.4}, one sees that the problem of determining $N_{s,2}(B;\bfy)$ is equivalent to 
the classical problem of counting integral solutions of a homogeneous quadratic equation in 
$s-1$ variables subject to a congruence condition, and this is well-understood for all $s$. 

\begin{theorem}\label{theorem1.1}
Let $s$ and $k$ be natural numbers with $k\ge 3$ and $s\ge k(k+1)$. Also, let 
$c_1,\ldots ,c_s$ and $n$ be fixed non-zero integers, with $c_1,\ldots ,c_s$ neither all 
positive nor all negative. Suppose that $y_1,\ldots ,y_s$ are non-zero integers satisfying the 
equation \eqref{1.2}. Then, provided that the system \eqref{1.4} has non-singular real and 
$p$-adic solutions for every prime number $p$, there is a positive number 
$\calC_{s,k}(\bfy)$ for which
\begin{equation}\label{1.5}
N_{s,k}(B;\bfy)=\calC_{s,k}(\bfy)B^{s-k(k+1)/2}+o(B^{s-k(k+1)/2}).
\end{equation}
\end{theorem}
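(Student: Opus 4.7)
The plan is to apply the Hardy-Littlewood circle method to the system \eqref{1.4}, with the minor arcs controlled by the sharp Vinogradov mean value theorem at its critical exponent so as to break the convexity barrier.

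For each $1\le i\le s$ and $\bfalp=(\alpha_1,\ldots,\alpha_k)\in[0,1)^k$, I introduce the weighted exponential sum
\[
f_i(\bfalp;B)=\sum_{|z|\le B}e\Bigl(\sum_{j=1}^k\alpha_jc_iy_i^{k-j}z^j\Bigr),
\]
so that orthogonality yields
\[
N_{s,k}(B;\bfy)=\int_{[0,1)^k}\prod_{i=1}^sf_i(\bfalp;B)\,d\bfalp.
\]
I would then perform a standard Hardy-Littlewood dissection $[0,1)^k=\grM\cup\grm$, with the major arcs $\grM$ consisting of $\bfalp$ whose coordinates approximate $a_j/q$ for small $q$ and with width of order $P^{-j}$ in the $j$th direction, for an auxiliary parameter $P=B^\eta$.

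\emph{Minor arcs.} The crucial observation is that, after the linear change of variables $\alpha_j\mapsto\alpha_j/(c_iy_i^{k-j})$, the integral $\int_{[0,1)^k}|f_i(\bfalp;B)|^{k(k+1)}\,d\bfalp$ becomes, up to a Jacobian depending only on $\bfy,\bfc$, a standard Vinogradov mean value. The sharp theorem---available through the author's efficient congruencing or through decoupling due to Bourgain-Demeter-Guth---supplies the bound
\[
\int_{[0,1)^k}|f_i(\bfalp;B)|^{k(k+1)}\,d\bfalp\ll_{\bfy,\bfc}B^{k(k+1)/2+\varepsilon}.
\]
Coupled with a Weyl-type pointwise bound $\sup_{\bfalp\in\grm}|f_i(\bfalp;B)|\ll B^{1-\sigma(k)}$ for some $\sigma(k)>0$, and H\"older's inequality distributing the $s$ factors appropriately, this should deliver $\int_\grm\prod_i|f_i(\bfalp;B)|\,d\bfalp=o(B^{s-k(k+1)/2})$.

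\emph{Major arcs.} On $\grM$, each $f_i$ is approximated in the usual way by $q^{-1}S_i(q,\bfa)v_i(\bfbet;B)$, where $S_i$ is a complete Gauss-type sum and $v_i$ the corresponding oscillatory integral. Standard pruning and completion then factor the major-arc contribution as the product of a singular series $\grS(\bfy)$ and singular integral $\grJ(\bfy)$, each positive under the non-singular local solubility hypothesis---via Hensel lifting for the former and the implicit function theorem for the latter---and one sets $\calC_{s,k}(\bfy)=\grS(\bfy)\grJ(\bfy)$.

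\emph{The main obstacle} resides precisely at the critical threshold $s=k(k+1)$. A naive combination of H\"older with the Vinogradov mean value gives only $O(B^{s-k(k+1)/2+\varepsilon})$---tantalizingly close to, but insufficient for, the desired asymptotic. The ``subconvexity'' content of the theorem demands genuine $o$-savings at this endpoint, which would likely require either an $\varepsilon$-free decoupling variant restricted to minor arcs, or a refined multi-scale pruning that transmits pointwise Weyl savings into integrated bounds, perhaps exploiting the non-trivial modulating weights $c_iy_i^{k-j}$ to break Vinogradov symmetries. This delicate endpoint analysis is the technical heart of the argument.
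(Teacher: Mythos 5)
Your sketch correctly sets up the circle method and, commendably, you put your finger on exactly where the argument as you have framed it breaks down: at the critical threshold $s=k(k+1)$, H\"older plus the sharp Vinogradov mean value theorem plus a Weyl bound delivers only $O(B^{s-k(k+1)/2+\eps})$, which is not $o(B^{s-k(k+1)/2})$. However, the devices you speculate might close this gap (an $\eps$-free decoupling restricted to minor arcs, or a multi-scale pruning ``exploiting the weights $c_iy_i^{k-j}$ to break Vinogradov symmetries'') are not how the paper proceeds, and it is not clear that either of them could be made to work.

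The missing idea is a translation-averaging trick built on the structure of the system \eqref{1.4}. Substituting $x_i\mapsto x_i-y_iz$ for an auxiliary integer shift $z$, and invoking the binomial theorem together with \eqref{1.2}, one finds that the system \eqref{1.4} transports onto the same system in the new variables but with one extra ``$0$-th'' variable $z$ carrying the coefficient $c_0=-(c_1y_1^k+\cdots+c_sy_s^k)=-n\ne 0$. Averaging over $|z|\le\Lambda X$ (so about $X$ choices of $z$) and applying H\"older to the $s+1$ resulting exponential sums then yields a bound of the shape
\[
I_s(\grB;X)\ll X^{-1}(\log X)^{s+1}\prod_{i=0}^{s}\grJ_{s+1,k}(c_i\grB;X)^{1/(s+1)},
\]
which is the fundamental Lemma \ref{lemma2.1} of the paper. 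The prefactor $X^{-1}$ coming from this extra averaging is the genuine source of the subconvexity: at $s=k(k+1)$ one has $s+1=k(k+1)+1$, so $\grJ_{s+1,k}$ is a Vinogradov moment just above the critical point, and the $X^{-1}$ saving overcomes the $X^\eps$ loss. Combining this with a one-dimensional dissection in $\alp_k$ (Weyl savings on $\grm$, and measure bounds on the complementary pieces $\grW_2,\grW_3$) delivers the power saving $X^{-1/(8k^3)}$ over the main term. Without this shift-and-average step, the argument you outlined cannot reach the asymptotic formula at $s=k(k+1)$, and a separate treatment of the $\grW_2,\grW_3$ ranges (not present in your sketch) is also needed before the major-arc analysis can be isolated to the narrow arcs $\grP$.
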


Some remarks are in order concerning the nature of the conclusion provided by Theorem 
\ref{theorem1.1}. First, since $s_0(k)\le k(k+1)$ for all natural numbers $k$, the discussion 
above ensures that there are plenty of solutions $\bfy\in (\dbZ\setminus \{0\})^s$ 
satisfying the equation \eqref{1.2} whenever local solubility conditions permit such a 
conclusion. Here, it is apparent that obstructions to $p$-adic solubility may be present when 
the bulk of the coefficients $c_i$ are divisible by $p$, and yet $n$ is not. However, one may 
regard the first important hypothesis of this theorem as being essentially harmless. Next, as 
we demonstrate in \S7, the existence of non-singular real and $p$-adic solutions of the 
system \eqref{1.4} follows in two simple circumstances occurring generically. First, should 
the solution $\bfy\in (\dbZ\setminus \{0\})^s$ of the equation \eqref{1.2} satisfy the 
condition that there be no vanishing subsums, then any solution $\bfz\ne {\mathbf 0}$ of 
the system \eqref{1.4} over $\dbR$ or $\dbQ_p$ is automatically non-singular. Secondly, 
subject only to the condition that $y_1,\ldots ,y_s$ are non-zero integers satisying the 
equation \eqref{1.2}, any solution $\bfz$ of the system \eqref{1.4} over $\dbR$ or 
$\dbQ_p$ is non-singular whenever $z_i\ne 0$ for $1\le i\le s$.\par

Finally, as the reader will have anticipated, one may interpret the coefficient 
$\calC_{s,k}(\bfy)$ appearing in the asymptotic formula \eqref{1.5} as a product of local 
densities, the description of which requires some preparation. When $p$ is a prime number 
and $h\in \dbN$, write $M_p(h)$ for the number of solutions $\bfz$ of the system 
\eqref{1.4} with $\bfz \in (\dbZ/p^h\dbZ)^s$. Also, when $\eta>0$, denote by 
$M_\infty (\eta)$ the volume of the subset of $[-1,1]^s$ defined by the inequalities
\[
\biggl| \sum_{i=1}^s c_iy_i^{k-j}z_i^j\biggr| <\eta \quad (1\le j\le k).
\]
Then the limits
\[
\sig_\infty=\lim_{\eta\rightarrow 0+}(2\eta)^{-k}M_\infty (\eta) \quad \text{and}\quad 
\sig_p=\lim_{h\rightarrow \infty}p^{h(k-s)}M_p(h),
\]
when they exist, respectively define the real and $p$-adic densities of solutions of the 
system \eqref{1.4}. We show in \S5 that, under the hypotheses of the statement of 
Theorem \ref{theorem1.1}, both limits exist, and one has 
$\calC_{s,k}(\bfy)=\sig_\infty \prod_p\sig_p$, where the product is taken over all prime 
numbers $p$. Furthermore, one has $1\ll \calC_{s,k}(\bfy)\ll 1$.\par

The conclusion of Theorem \ref{theorem1.1} shows, subject to natural local solubility 
conditions and the constraint $s\ge k(k+1)$, that there is an abundance of affine lines 
having integral coefficients passing through each eligible integral point of the hypersurface 
determined by the equation \eqref{1.1}. In the situation wherein $s=k(k+1)$, the 
conclusion of Theorem \ref{theorem1.1} surmounts the convexity barrier in the circle 
method, since the number of variables is precisely twice the sum of the degrees of the 
polynomials defining the system of equations \eqref{1.4}. This subconvexity conclusion is 
made more apparent by a consideration of the associated exponential sums. When $k\ge 3$ 
and $X$ is a large real number, define $f(\bfalp)=f_k(\bfalp;X)$ by putting
\begin{equation}\label{1.6}
f_k(\bfalp;X)=\sum_{|x|\le X}e(\alp_1x+\alp_2x^2+\ldots +\alp_kx^k),
\end{equation}
where, as usual, we write $e(z)=e^{2\pi iz}$. We introduce a Hardy-Littlewood dissection 
to facilitate discussion. Write $L=X^{1/(8k^2)}$. Then, when
\[
1\le q\le L,\quad 0\le \bfa\le q\quad \text{and}\quad (q,\bfa)=1,
\]
we define the major arc $\grP(q,\bfa)$ by
\[
\grP(q,\bfa)=\{\bfalp\in [0,1)^k:|\alp_j-a_j/q|\le LX^{-j}\ (1\le j\le k)\}.
\]
Here and throughout this paper, we facilitate concision by adopting the use of extended 
vector notation. Thus, we write $0\le \bfa\le q$ to denote that $0\le a_j\le q$ for 
$1\le j\le k$, and we write $(q,\bfa)$ for the greatest common divisor $(q,a_1,\ldots ,a_k)$ 
of $q$ and $a_1,\ldots ,a_k$. The arcs $\grP(q,\bfa)$ are disjoint, as is easily verified. Let 
$\grP$ denote their union, and put $\grp=[0,1)^k\setminus \grP$.\par

\par We illustrate the subconvexity estimates available through the approach underlying the 
proof of Theorem \ref{theorem1.1} with the following conclusion.

\begin{theorem}\label{theorem1.2}
Let $k$ and $s$ be natural numbers with $k\ge 3$. Suppose that $c_1,\ldots ,c_s$ are 
non-zero integers satisfying the property that 
\begin{equation}\label{1.7}
c_1+\ldots +c_s\ne 0.
\end{equation}
Then, whenever $1\le s<k(k+1)$, one has
\begin{equation}\label{1.8}
\int_{[0,1)^k}f_k(c_1\bfalp;X)\cdots f_k(c_s\bfalp;X)\d\bfalp \ll X^{(s-1)/2+\eps}.
\end{equation}
When $s=k(k+1)$, meanwhile, one has
\begin{equation}\label{1.9}
\int_\grp f_k(c_1\bfalp;X)\cdots f_k(c_s\bfalp;X)\d\bfalp \ll X^{(s-\del)/2+\eps},
\end{equation}
where $\del=1/(4k^3)$, and when $s>k(k+1)$ one has
\begin{equation}\label{1.10}
\int_\grp f_k(c_1\bfalp;X)\cdots f_k(c_s\bfalp;X)\d\bfalp \ll 
X^{s-\tfrac{1}{2}k(k+1)-\tfrac{1}{2}\del+\eps}.
\end{equation}
\end{theorem}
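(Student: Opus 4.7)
My plan is to establish \eqref{1.8} first, and then to derive \eqref{1.9} and \eqref{1.10} from \eqref{1.8} combined with a Weyl-type minor arc bound for $f_k$. The three main ingredients are (i) the Vinogradov mean value theorem in its sharp form, $\int_{[0,1)^k}|f_k(\bfalp;X)|^{2t}\d\bfalp\ll X^\eps(X^t+X^{2t-k(k+1)/2})$, due to Wooley and Bourgain--Demeter--Guth; (ii) a Weyl-type bound $\sup_{\bfalp\in\grp}|f_k(c\bfalp;X)|\ll X^{1-\sigma+\eps}$ for each fixed non-zero integer $c$, with $\sigma=\sigma(k)>0$ calibrated by $L=X^{1/(8k^2)}$; and (iii) the hypothesis $c_1+\ldots+c_s\ne 0$.

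For \eqref{1.8}, the integrand $\prod_i f(c_i\bfalp)$ has integral equal to the number of tuples $\bfx\in[-X,X]^s$ satisfying $\sum_i c_ix_i^j=0$ for $1\le j\le k$. To extract the $X^{1/2}$ saving beyond the naive H\"older/Vinogradov bound $X^{s/2+\eps}$, I would first integrate in $\alp_1$ by orthogonality, enforcing the linear relation $\sum_i c_ix_i=0$, and then apply Cauchy--Schwarz in $(\alp_2,\ldots,\alp_k)$. The squared integral is then dominated by the count of pairs $(\bfx,\bfy)\in[-X,X]^{2s}$ satisfying the balanced Vinogradov-type system $\sum_i c_i(x_i^j-y_i^j)=0$ for $1\le j\le k$, together with the extra linear constraint $\sum_i c_ix_i=0$. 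Since $\sum_i c_i\ne 0$, this extra constraint is transverse to the symmetric balanced system, so combining the Vinogradov MVT with this constraint saves one factor of $X$, producing a count $\ll X^{s-1+\eps}$; taking square roots yields \eqref{1.8}.

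For \eqref{1.9} and \eqref{1.10}, I would combine \eqref{1.8} with the Weyl-type sup estimate on the minor arcs. For $s\ge k(k+1)$, set $m=s-k(k+1)+1\ge 1$. An averaging argument based on $\sum_i c_i\ne 0$ furnishes a subset $T\subset\{1,\ldots,s\}$ of size $m$ with $\sum_{i\notin T}c_i\ne 0$. Extracting the $T$-indexed factors via the supremum on $\grp$ and estimating the remaining signed integral by \eqref{1.8}---applicable because $|T^c|=k(k+1)-1<k(k+1)$ and $\sum_{i\notin T}c_i\ne 0$---would give formally
\[
\Bigl|\int_\grp\prod_i f(c_i\bfalp)\,\d\bfalp\Bigr|\ll X^{m(1-\sigma)+\eps}\cdot X^{(k(k+1)-2)/2+\eps}=X^{s-k(k+1)/2-m\sigma+\eps}.
\]
Taking $\sigma=1/(8k^3)$ so that $2\sigma=\del=1/(4k^3)$ then produces \eqref{1.9} exactly at the critical value $m=1$, and \eqref{1.10} with slack to spare for $m\ge 2$.

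The hard part will be to legitimize the last display: the naive inequality $|\int_\grp UV\,\d\bfalp|\le\sup_\grp|U|\cdot\int_\grp|V|\,\d\bfalp$ replaces the signed integral of $V=\prod_{i\notin T}f(c_i\bfalp)$ by its absolute value, whereas \eqref{1.8} bounds only the signed integral---and the absolute-value analogue of \eqref{1.8} is weaker by exactly the $X^{1/2}$ factor that \eqref{1.8} was designed to save. Surmounting this will require either a more intricate Cauchy--Schwarz decomposition that keeps the $\alp_1$-orthogonality alive at the minor-arc level, or a pointwise pairing of the sup estimate with the signed integral that preserves the cancellation structure underlying \eqref{1.8}. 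This coupling is the technical heart of the argument.
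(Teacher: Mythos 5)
Your argument for \eqref{1.8} is close in spirit to the paper's, but the key step is left as an assertion. The claim that ``this extra constraint is transverse to the symmetric balanced system, so combining the Vinogradov MVT with this constraint saves one factor of $X$'' is exactly the content that requires work; transversality alone never buys a power saving in circle-method counts (that is precisely the convexity barrier the theorem is surmounting). To make the factor of $X$ rigorous one must exploit the translation invariance of the balanced Vinogradov system together with the fact that $\sum_i c_i\ne 0$: shifting $(\bfx,\bfy)\mapsto(\bfx+z\mathbf{1},\bfy+z\mathbf{1})$ preserves the balanced system but moves $\sum_i c_ix_i$ to $\sum_i c_ix_i+z\sum_i c_i$, so that solutions with $\sum_i c_ix_i=0$ occupy at most a $\sim 1/X$ fraction of the (slightly enlarged) balanced count. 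This is precisely what Lemma~2.1 of the paper does, cleanly and for a general measurable $\grB\subseteq\dbR$; the paper introduces an extra summation variable $z$ (equivalently an extra generating function $f_k(c_0\bfalp-\bfgamtil;\Lambda X)$ with $c_0=-\sum_i c_i\ne 0$), applies H\"older to the $s+1$ resulting factors, and gains $X^{-1}\cdot\grJ_{s+1,k}(\cdot)^{(s+1)/(s+1)}\ll X^{-1}X^{(s+1)/2+\eps}$. Your Cauchy--Schwarz framing is a valid alternative entry point, but once you supply the missing translation-invariance argument you will have essentially reconstructed the paper's Lemma~2.1.

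For \eqref{1.9} and \eqref{1.10} the gap you identify in your final paragraph is genuine and, as stated, fatal: the signed bound \eqref{1.8} cannot be inserted after extracting a supremum factor, and the absolute-value analogue of \eqref{1.8} on the complementary indices is worse by exactly the $X^{1/2}$ you need. The paper does not attempt anything of this form. Instead it keeps the translation-invariance engine alive on the minor arcs: Lemma~2.1 is stated for a general measurable $\grB$, and Lemma~2.3 takes $\grB=\grm(Q)$ (a one-dimensional set of minor arcs in $\alp_k$), inserts the Weyl-type pointwise bound $\sup_{\alp_k\in\grm(Q)}|f_k(\bfalp;X)|\ll X^{1+\eps}Q^{-\sig(k)}$ into one factor of the resulting $\grJ_{s+1,k}(c_i\grm(Q);X)$, and so obtains $I_s(\grm(Q);X)\ll X^{s-\tfrac12 k(k+1)+\eps}Q^{-\sig(k)}$ without ever detaching the supremum from the averaged structure. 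This handles only the region $\grW_1=[0,1)^{k-1}\times\grm$. The paper then partitions $[0,1)^k$ into four regions and treats the remaining two minor-arc pieces by different means: $\grW_2=([0,1)^{k-1}\times\grM)\cap\grn$ has tiny measure $\ll Q^2X^{-k}$ and is estimated via H\"older, the Weyl bound on $\grn$, and the VMVT in $k-1$ variables (Lemma~4.2); and $\grW_3=([0,1)^{k-1}\times\grM)\cap(\grN\setminus\grP)$ is handled by combining the Weyl bound on $\grp$ with a major-arc pruning estimate for $\int_\grN|f|^u$ (Lemmata~4.3 and~4.4). No single sup-times-integral split suffices, and the $k$-dimensional dissection into $\grW_1,\grW_2,\grW_3,\grP$ is essential to the paper's proof of \eqref{1.9} and \eqref{1.10}.
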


Given the trivial estimate $|f_k(\bfalp;X)|\le 2X+1$, the bounds \eqref{1.8} and \eqref{1.9} 
plainly go beyond those that would result from square-root cancellation, and consequently 
constitute subconvexity estimates in the sense described in our work joint with Br\"udern 
\cite{BW2014}. We remark in this context that, with greater effort, it would be possible to 
establish the estimate \eqref{1.9} with a larger value of $\del$. In this paper we have 
elected to opt for a more concise account yielding reasonable qualitative results, rather than 
seek the strongest quantitative results that might be accessible.\par

We briefly offer a sketch of the strategy underlying the proof of Theorem \ref{theorem1.1}, 
restricting attention to the simpler situation that is the focus of Theorem \ref{theorem1.2}. 
Here, by orthogonality, the mean value
\[
\Ups=\int_{[0,1)^k}\prod_{i=1}^sf_k(c_i\bfalp;X)\d\bfalp 
\]
on the left hand side of \eqref{1.8} counts the number of integral solutions of the system of 
equations
\begin{equation}\label{1.11}
\sum_{i=1}^sc_ix_i^j=0\quad (1\le j\le k),
\end{equation}
with $|x_i|\le X$ $(1\le i\le s)$. For each such solution $\bfx$, and for every integer $y$ 
with $1\le y\le X$, it follows from the binomial theorem that
\[
\sum_{i=1}^sc_i(x_i+y)^j=-c_0y^j\quad (1\le j\le k),
\]
where we write $c_0=-(c_1+\ldots +c_s)$. We note that our hypothesis \eqref{1.7} 
concerning the coefficients $c_i$ ensures that one has $c_0\ne 0$. We therefore see that 
for each integer $y$ with $|y|\le X$, the number of integral solutions of the system 
\eqref{1.11} counted by $\Ups$ is bounded above by the number of integral solutions of
\[
c_0y^j+\sum_{i=1}^sc_iz_i^j=0\quad (1\le j\le k),
\]
with $|z_i|\le 2X$ $(1\le i\le s)$. By averaging over these values of $y$ and invoking 
orthogonality, we thus deduce that
\begin{equation}\label{1.12}
\int_{[0,1)^k}\prod_{i=1}^sf_k(c_i\bfalp;X)\d\bfalp \le X^{-1}
\int_{[0,1)^k}\prod_{j=0}^s f_k(c_j\bfalp;2X)\d\bfalp .
\end{equation}
By comparison with the mean value \eqref{1.8}, we now have an additional variable over 
which to average in \eqref{1.12}, and it is this which permits us to achieve subconvexity. 
We note that, in order to analyse the mean value \eqref{1.9}, which is restricted to minor 
arcs only, we employ some ideas from harmonic analysis previously deployed in our work 
\cite{Woo2012} devoted to the asymptotic formula in Waring's problem.\par

This paper is organised as follows. We derive the fundamental lemma, based on the 
strategy just described, in \S2. This work already permits a swift proof of the first 
subconvex estimate \eqref{1.8} recorded in Theorem \ref{theorem1.2}. In \S3 we begin 
the proof of a more general variant of the minor arc estimate \eqref{1.9} recorded in 
Theorem \ref{theorem1.2}. This lays the foundation of the proof of Theorem 
\ref{theorem1.1}. This preliminary minor arc estimate is converted in \S4 into one more 
accessible to conventional applications of the Hardy-Littlewood method. The major arc 
analysis required to complete the proof of Theorem \ref{theorem1.1} is then tackled in \S5. 
We complete the proofs of Theorems \ref{theorem1.1} and \ref{theorem1.2} in \S6. Finally, 
in \S7, we discuss the non-singularity condition implicit in Theorem \ref{theorem1.1}, 
showing that the existence of non-singular solutions of the system \eqref{1.4} is implied by 
the conditions that we have already noted.\par

Throughout, the letter $\eps$ will denote a positive number. We adopt the convention that 
whenever $\eps$ appears in a statement, either implicitly or explicitly, we assert that the 
statement holds for each $\eps>0$. Our basic parameter will be either $X$ or $B$, a 
sufficiently large positive number. In addition, we use $\ll$ and $\gg$ to denote Vinogradov's 
well-known notation, implicit constants depending at most on $k$, $s$ and $\eps$, as well 
as other ambient parameters apparent from the context. Finally, we define $\|\tet\|$ for 
$\tet\in \dbR$ by putting $\| \tet\|=\min\{|\tet -n|: n\in \dbZ\}$.\vskip.1cm

\noindent {\bf Historical note:} The first version of this paper dates from 2014, motivated 
by the author's proof in January 2014 of the main conjecture in the cubic case of 
Vinogradov's mean value theorem (see \cite{Woo2016a}, which first appeared as 
arXiv:1401.3150). The author is grateful to Julia Brandes, Simon Rydin Myerson, Per 
Salberger and others for their comments on talks on this topic delivered at Warwick, King's 
College London, Oxford and G\"oteborg in the period 2014 to 2016 as the associated ideas 
evolved. These ideas subsequently delivered subconvex conclusions in the Hilbert-Kamke 
problem (see \cite{Woo2022c}) and affine variants of Vinogradov's mean value theorem 
(see \cite{Woo2022a, Woo2022b}, and note also \cite{BH2022}).   

\section{An averaged mean value} We begin by interpreting the strategy outlined at the end 
of the introduction as it applies to a mean value not necessarily open to a Diophantine 
interpretation. This supplies a fairly general conclusion useful in our subsequent 
deliberations. We suppose throughout that $s$, $k$, $\bfy$ and $n$ are fixed as in the 
preamble to the statement of Theorem \ref{theorem1.1}. When $y\in \dbZ$, we define 
$\bet_j(y)=\bet_j(\bfalp;y)$ by putting
\begin{equation}\label{2.1}
\bet_j(\bfalp;y)=y^{k-j}\alp_j\quad (1\le j\le k).
\end{equation}
In the proof of the next lemma as well as in its preamble, when $j\in \{k-1,k\}$, we 
promote concision by abbreviating the differential $\d\alp_1\ldots \d\alp_j$ to $\d\bfalp_j$. 
Then, when $\grB\subseteq \dbR$ is measurable, we introduce the mean value
\begin{equation}\label{2.2}
I_s(\grB;X)=\int_\grB \int_{[0,1)^{k-1}} \prod_{i=1}^s f(c_i\bfbet(y_i))\d\bfalp_k,
\end{equation}
in which $f(\bftet)=f_k(\bftet;X)$ is defined via \eqref{1.6}. Notice that, by orthogonality, 
one has $N_{s,k}(B;\bfy)=I_s([0,1);B)$. We make use of technology associated with 
Vinogradov's mean value theorem. With this in mind, when $t,k\in \dbN$, the parameter 
$X$ is positive, and $\grB\subseteq \dbR$ is measurable, we define
\begin{equation}\label{2.3}
\grJ_{t,k}(\grB;X)=\int_\grB \int_{[0,1)^{k-1}}|f_k(\bfalp;X)|^t\d\bfalp_k.
\end{equation}

\begin{lemma}\label{lemma2.1}
Let $\bfc,\bfy\in (\dbZ\setminus \{0\})^s$, and define
\begin{equation}\label{2.4}
c_0=-(c_1y_1^k+\ldots +c_sy_s^k).
\end{equation}
Suppose that $c_0\ne 0$. Then, whenever $\grB\subseteq \dbR$ is measurable, one has
\[
I_s(\grB;X)\ll X^{-1}(\log X)^{s+1}\prod_{i=0}^s\grJ_{s+1,k}(c_i\grB;X)^{1/(s+1)}.
\]
Here, the constant implicit in Vinogradov's notation may depend on $\bfy$.
\end{lemma}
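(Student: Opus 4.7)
The strategy is to apply the binomial shift trick from the introduction at the level of the integrand, inserting an auxiliary factor $f_k(c_0\bfalp)$, and then bound the resulting $(s+1)$-fold product via H\"older's inequality.

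First, interpret $I_s(\grB;X)$ by orthogonality as
$$\sum_{\substack{\bfv\in\dbZ^s,\,|v_i|\le X\\S_j(\bfv)=0,\,1\le j\le k-1}}\int_\grB e(\alp_k S_k(\bfv))\,d\alp_k,\qquad S_j(\bfv):=\sum_{i=1}^s c_iy_i^{k-j}v_i^j.$$
The binomial theorem together with the identity $c_0=-\sum_i c_iy_i^k$ gives the crucial identity: whenever $S_j(\bfv)=0$ for $1\le j\le k-1$, the pair $(t,\bfu):=(t,\bfv+t\bfy)$ satisfies
$$c_0t^j+\sum_{i=1}^s c_iy_i^{k-j}u_i^j=0\ (1\le j\le k-1),\qquad c_0t^k+\sum_{i=1}^s c_iu_i^k=S_k(\bfv).$$
Averaging over integers $t$ with $|t|\le T\asymp X/\max_i|y_i|$, every $\bfv$ counted by $I_s(\grB;X)$ yields $|u_i|\le X':=X+T\max_i|y_i|\asymp X$ for all such $t$. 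By orthogonality, the extended integral
$$J(\grB;X',T):=\int_\grB\int_{[0,1)^{k-1}}f_k(c_0\bfalp;T)\prod_{i=1}^s f_k(c_i\bfbet(y_i);X')\,d\bfalp$$
equals $(2T+1)I_s(\grB;X)$ plus a boundary contribution from $\bfu$ with $\max_i|u_i|\in(X,X']$. Bounding $|J|$ by the triangle inequality and controlling the boundary via a dyadic decomposition in $X$ produces
$$|I_s(\grB;X)|\ll X^{-1}(\log X)^{s+1}\int_\grB\int_{[0,1)^{k-1}}|f_k(c_0\bfalp;X)|\prod_{i=1}^s|f_k(c_i\bfbet(y_i);X)|\,d\bfalp.$$

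Next, apply H\"older's inequality to these $s+1$ factors:
$$\int_\grB\int_{[0,1)^{k-1}}|f_k(c_0\bfalp;X)|\prod_{i=1}^s|f_k(c_i\bfbet(y_i);X)|\,d\bfalp\le\prod_{i=0}^s\left(\int_\grB\int_{[0,1)^{k-1}}|f_k(c_i\bfbet(y_i);X)|^{s+1}\,d\bfalp\right)^{1/(s+1)},$$
adopting the convention $y_0:=1$ so that $\bfbet(y_0)=\bfalp$. In each factor, the successive changes of variable $\gam_j=y_i^{k-j}\alp_j$ ($1\le j\le k-1$) and $\delta_j=c_i\gam_j$, combined with the period-$1$ invariance of $|f_k|$ in each coordinate (valid because $c_i,y_i\in\dbZ\setminus\{0\}$), convert the $i$th factor into $|c_i|^{-1}\grJ_{s+1,k}(c_i\grB;X)$; the factors $|c_i|^{-1/(s+1)}$ absorb into the implicit constant depending on $\bfy$.

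The principal obstacle is the boundary analysis in the second step: since the weight $W(\bfv):=\int_\grB e(\alp_k S_k(\bfv))\,d\alp_k$ is complex-valued, it does not factor after taking absolute values, so relating the sum of $W(\bfv)$ over $|v_i|\le X$ to the sum over $|u_i|\le X'$ requires iteration; this dyadic iteration in $X$ is precisely what produces the $(\log X)^{s+1}$ factor in the final estimate.
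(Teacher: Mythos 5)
Your overall plan is aligned with the paper's: exploit the binomial shift built into the system, introduce the extra variable weighted by $c_0$, then close with H\"older and changes of variable. The H\"older step and the reduction of each factor to $|c_i|^{-1}\grJ_{s+1,k}(c_i\grB;X)$ are correct and match the paper. The gap is exactly where you flag the ``principal obstacle,'' and I do not think your dyadic-decomposition sketch resolves it. When you pass from $(t,\bfv)$ to $(t,\bfu)=(t,\bfv+t\bfy)$, the identity $J(\grB;X',T)=(2T+1)I_s(\grB;X)+E$ has an error term $E$ that is a sum of the \emph{complex} weights $W(\bfv)$ over $s$-tuples $\bfv$ with $S_j(\bfv)=0$ $(j<k)$, $\max_i|v_i|>X$, and $|v_i+ty_i|\le X'$ for the relevant $t$. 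Because $\grB$ is an arbitrary measurable set, there is no positivity and hence no monotonicity in the box size: you cannot bound the boundary contribution by the main term, and a dyadic iteration of the same inequality simply reproduces the same unresolved $E$ at each scale. The claim that this ``produces'' the desired bound with a clean $(\log X)^{s+1}$ factor is asserted, not proved, and I do not see how to make it work.

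The paper sidesteps this entirely by never truncating the shifted sums. Instead, each $f(c_i\bfbet(y_i);X)$ is first rewritten as a shifted sum over the wider range $|x|\le 2X$, and the original range $|x-y_iz|\le X$ is recovered by convolving against the Fej\'er-type kernel $K(\gam)=\sum_{|w|\le X}e(-\gam w)$: concretely, $f(\bfbet(y_i);X)=\int_0^1\grf_{i,z}(\bfalp;\gam)K(\gam)\d\gam$ for $|z|\le\Lam X$. After summing over $z$ and introducing a further kernel $K_0$ for the new variable, the range bookkeeping is entirely absorbed into $s+1$ integrals $\int_0^1|K_i(\gam)|\d\gam\ll\log X$, which is precisely the source of the $(\log X)^{s+1}$ factor and requires no positivity whatsoever. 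So your outline is missing this key analytic device; once you replace the ``average over $t$ and clip the range'' step with the kernel representation, the rest of your argument (translation invariance, H\"older, change of variables) goes through and matches the paper.
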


\begin{proof} We make use of the translation invariance underlying a blown-up version of 
the system of Diophantine equations underlying the mean value \eqref{2.2}. Write
\begin{equation}\label{2.5}
\psi(u;\bftet)=\tet_1u+\ldots +\tet_ku^k.
\end{equation}
Observe first that for each index $i$, and every integral shift $z$, it follows from 
\eqref{1.6} that one has
\begin{equation}\label{2.6}
f(\bfbet(y_i);X)=\sum_{|x-y_iz|\le X}e(\psi(x-y_iz;\bfbet(y_i))).
\end{equation}
Write
\begin{equation}\label{2.7}
\grf_{i,z}(\bfalp;\gam)=\sum_{|x|\le 2X}e \left(\psi(x-y_iz;\bfbet(y_i))+\gam (x-y_iz)\right) .
\end{equation}
In addition, define
\begin{equation}\label{2.8}
K(\gam)=\sum_{|w|\le X}e(-\gam w),
\end{equation}
and put
\[
\Lam=\displaystyle{\min_{1\le i\le s}}|y_i|^{-1}.
\]
Then we deduce from \eqref{2.6} via orthogonality that when $|z|\le \Lam X$, one has
\begin{equation}\label{2.9}
f(\bfbet (y_i);X)=\int_0^1\grf_{i,z}(\bfalp;\gam )K(\gam)\d\gam .
\end{equation}

\par Next, define
\begin{equation}\label{2.10}
\grF_z(\bfalp;\bfgam)=\prod_{i=1}^s\grf_{i,z}(c_i\bfalp;\gam_i).
\end{equation}
Then, on substituting \eqref{2.9} into \eqref{2.2}, we deduce that for each integer $z$ 
satisfying $|z|\le \Lam X$, one has
\begin{equation}\label{2.11}
I_s(\grB;X)=\int_{[0,1)^s}\calI(\bfgam;z)\Ktil(\bfgam)\d\bfgam ,
\end{equation}
where
\begin{equation}\label{2.12}
\calI(\bfgam;z)=\int_\grB \int_{[0,1)^{k-1}}\grF_z(\bfalp;\bfgam)\d\bfalp_k
\end{equation}
and
\begin{equation}\label{2.13}
\Ktil(\bfgam)=\prod_{i=1}^s K(\gam_i).
\end{equation} 

\par By orthogonality, one finds that
\begin{equation}\label{2.14}
\int_{[0,1)^{k-1}}\grF_z(\bfalp;\bfgam)\d\bfalp_{k-1}=
\sum_{|\bfx|\le 2X}\Del(\alp_k,\bfgam,z),
\end{equation}
where $\Del(\tet,\bfgam,z)$ is equal to
\[
e\left( \tet \sum_{i=1}^sc_i(x_i-y_iz)^k+\sum_{i=1}^s (x_i-y_iz)\gam_i\right),
\]
when
\begin{equation}\label{2.15}
\sum_{i=1}^sc_iy_i^{k-j}(x_i-y_iz)^j=0\quad (1\le j\le k-1),
\end{equation}
and otherwise $\Del(\tet,\bfgam,z)$ is equal to $0$.\par

By applying the binomial theorem and recalling \eqref{2.4}, one discerns that whenever the 
system \eqref{2.15} is satisfied by the $s$-tuple $\bfx$, then
\[
c_0z^j+\sum_{i=1}^sc_iy_i^{k-j}x_i^j=0\quad (1\le j\le k-1),
\]
and hence
\[
\sum_{i=1}^sc_i(x_i-y_iz)^k=c_0z^k+\sum_{i=1}^sc_ix_i^k.
\]
Then, on recalling \eqref{2.5}, it follows from \eqref{2.14} that
\[
\int_{[0,1)^{k-1}}\grF_z(\bfalp;\bfgam)\d\bfalp_{k-1}=e(-z\bfgam \cdot \bfy)
\int_{[0,1)^{k-1}}\grF_0(\bfalp;\bfgam)e(c_0\psi(z;\bfalp))\d\bfalp_{k-1}.
\]
From here, we are led from the relation \eqref{2.12} to the formula
\[
\calI(\bfgam;z)=e(-z\bfgam \cdot \bfy)\int_\grB \int_{[0,1)^{k-1}}\grF_0(\bfalp;\bfgam)
e(c_0\psi(z;\bfalp))\d\bfalp_k.
\]
Recalling the notation \eqref{1.6}, we may consequently conclude thus far that
\begin{equation}\label{2.16}
\sum_{|z|\le \Lam X}\calI(\bfgam;z)=\int_\grB \int_{[0,1)^{k-1}}
\grF_0(\bfalp;\bfgam)f(c_0\bfalp-\bfgamtil;\Lam X)\d\bfalp_k,
\end{equation}
where $\bfgamtil$ is defined by putting $\gamtil_1=\bfgam \cdot \bfy$ and $\gamtil_j=0$ 
$(2\le j\le k)$.\par

It is convenient at this point to set $y_0=1$ and to apply orthogonality just as in the 
argument leading to \eqref{2.9}. Thus, on recalling \eqref{2.7}, we see that
\[
f(\bfalp;\Lam X)=\int_0^1\grf_{0,0}(\bfalp;\gam)K_0(\gam)\d \gam,
\]
where
\begin{equation}\label{2.17}
K_0(\gam)=\sum_{|z|\le \Lam X}e(-\gam z). 
\end{equation}
Thus, by applying H\"older's inequality to \eqref{2.16} and recalling \eqref{2.10}, we see 
that
\begin{equation}\label{2.18}
\biggl| \sum_{|z|\le \Lam X}\calI(\bfgam;z)\biggr| \le \int_0^1
\biggl(\prod_{i=0}^s\Ome_i\biggr)^{1/(s+1)}|K_0(\gam_0)|\d\gam_0,
\end{equation}
where
\[
\Ome_0=\int_\grB \int_{[0,1)^{k-1}}|\grf_{0,0}(c_0\bfalp-\bfgamtil;\gam_0)|^{s+1}
\d\bfalp_k
\]
and
\[
\Ome_i=\int_\grB\int_{[0,1)^{k-1}}|\grf_{i,0}(c_i\bfalp;\gam_i)|^{s+1}\d\bfalp_k \quad 
(1\le i\le s).
\]
\par

By a change of variable and application of periodicity modulo $1$, we find from \eqref{2.1} 
and \eqref{2.7} that for $0\le i\le s$, one has
\[
\Ome_i=\int_\grB \int_{[0,1)^{k-1}}|\grf_{i,0}(c_i\bfalp;0)|^{s+1}\d\bfalp_k
=c_i^{-1}\int_{c_i\grB}\int_{[0,1)^{k-1}}|\grf_{i,0}(\bfalp;0)|^{s+1}\d\bfalp_k.
\]
It therefore follows from \eqref{2.3} via orthogonality that 
$\Ome_i=c_i^{-1}\grJ_{s+1,k}(c_i\grB;X)$. Thus we infer from \eqref{2.18} that
\[
\biggl| \sum_{|z|\le \Lam X}\calI(\bfgam;z)\biggr| \le \Biggl( \prod_{i=0}^s\grJ_{s+1,k}
(c_i\grB;X)\Biggr)^{1/(s+1)}\int_0^1|K_0(\gam_0)|\d\gam_0.
\]
On substituting this estimate into \eqref{2.11} and recalling \eqref{2.13}, we therefore 
obtain
\begin{align}
I_s(\grB;X)&\le (2\Lam X)^{-1}\int_{[0,1)^s}\biggl| \sum_{|z|\le \Lam X}
\calI (\bfgam;z)\Ktil(\bfgam)\biggr|\d\bfgam \notag\\
&\ll X^{-1}\prod_{i=0}^s\biggl( \grJ_{s+1,k}(c_i\grB;X)^{1/(s+1)}\int_0^1|K_i(\gam_i)|
\d\gam_i \biggr) ,\label{2.19}
\end{align}
in which we have taken the expedient step of writing $K_i(\gam_i)$ for $K(\gam_i)$ when 
$1\le i\le s$. Recall \eqref{2.8} and \eqref{2.17}. Then the elementary bound 
$K_i(\gam)\ll \min\{X,\|\gam\|^{-1}\}$ shows, as is familiar, that
\[
\int_0^1|K_i(\gam_i)|\d\gam_i\ll \log X\quad (0\le i\le s).
\]
Thus, we conclude from \eqref{2.19} that
\[
I_s(\grB;X)\ll X^{-1}(\log X)^{s+1}\prod_{i=0}^s\grJ_{s+1,k}(c_i\grB;X)^{1/(s+1)}.
\]
Here, we stress that the constant implicit in Vinogradov's notation may depend on $\bfy$. 
This completes the proof of the lemma.
\end{proof}

An almost immediate consequence of Lemma \ref{lemma2.1} delivers the first conclusion of 
Theorem \ref{theorem1.2}.

\begin{lemma}\label{lemma2.2} Let $s$ and $k$ be natural numbers with $1\le s<k(k+1)$. 
Suppose that $\bfc,\bfy\in (\dbZ\setminus \{0\})^s$ and 
$c_1y_1^k+\ldots +c_sy_s^k\ne 0$. Then one has
\[
I_s([0,1);X)\ll X^{(s-1)/2+\eps}.
\]
\end{lemma}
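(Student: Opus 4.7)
The plan is to derive the lemma directly from Lemma \ref{lemma2.1}, combined with the (now established) Vinogradov main conjecture. The hypothesis $c_1y_1^k+\ldots +c_sy_s^k\ne 0$ is precisely the condition $c_0\ne 0$ required to invoke Lemma \ref{lemma2.1}, so applying that lemma with $\grB=[0,1)$ yields
\[
I_s([0,1);X)\ll X^{-1}(\log X)^{s+1}\prod_{i=0}^s\grJ_{s+1,k}(c_i[0,1);X)^{1/(s+1)}.
\]
Since the integrand defining $\grJ_{s+1,k}$ is periodic modulo $1$ in each coordinate, one has $\grJ_{s+1,k}(c_i[0,1);X)\ll |c_i|\grJ_{s+1,k}([0,1);X)$, and the fixed factors $|c_i|$ may be absorbed into the implicit constant.

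I would next invoke the resolution of the Vinogradov main conjecture, which supplies the estimate
\[
\grJ_{t,k}([0,1);X)\ll X^{t/2+\eps}+X^{t-k(k+1)/2+\eps}
\]
for every $t\ge 1$ (the case of odd $t$ following from that of even $t$ by routine H\"older interpolation). Under the hypothesis $1\le s<k(k+1)$, one has $s+1\le k(k+1)$, whence $(s+1)/2\ge s+1-k(k+1)/2$, so the first term dominates and $\grJ_{s+1,k}([0,1);X)\ll X^{(s+1)/2+\eps}$. Inserting this into the display above, the product of $s+1$ factors contributes $X^{(s+1)/2+\eps}$, and upon absorbing the logarithmic factor into $X^\eps$ one arrives at
\[
I_s([0,1);X)\ll X^{-1+(s+1)/2+\eps}=X^{(s-1)/2+\eps},
\]
as required.

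Essentially all the substantive work is already packaged inside Lemma \ref{lemma2.1}, which provides the crucial saving of $X^{-1}$ that allows one to beat the square-root cancellation barrier. The only genuinely non-trivial external input is the Vinogradov main conjecture bound, which supplies square-root cancellation in the companion mean value with $s+1$ variables. There is no real obstacle to overcome here, which is consistent with the label ``almost immediate consequence'' assigned to this lemma in the author's preamble.
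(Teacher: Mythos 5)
Your proposal is correct and follows essentially the same route as the paper: invoke Lemma \ref{lemma2.1} with $\grB=[0,1)$, reduce $\grJ_{s+1,k}(c_i[0,1);X)$ to $|c_i|\grJ_{s+1,k}([0,1);X)$ by periodicity, and apply the resolved Vinogradov main conjecture with $s+1\le k(k+1)$ so that the diagonal term $X^{(s+1)/2+\eps}$ dominates. This is exactly the paper's argument, down to the observation that all the substantive work is contained in Lemma \ref{lemma2.1}.
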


\begin{proof} Put $c_0=-(c_1y_1^k+\ldots +c_sy_s^k)$. We apply Lemma \ref{lemma2.1} 
to obtain the bound
\begin{equation}\label{2.20}
I_s([0,1);X)\ll X^{\eps-1}\prod_{i=0}^s \grJ_{s+1,k}(c_i[0,1);X)^{1/(s+1)}.
\end{equation}
Here, in view of the definition \eqref{2.3}, we have
\[
\grJ_{s+1,k}(c_i[0,1);X)=|c_i|\int_{[0,1)^k}|f_k(\bfalp;X)|^{s+1}\d\bfalp .
\]
Since our hypothesis on $s$ ensures that $s+1\le k(k+1)$, we deduce from the (now 
confirmed) main conjecture in Vinogradov's mean value theorem (for which see 
\cite{BDG2015, Woo2016a, Woo2019}) that
\[
\grJ_{s+1,k}(c_i[0,1);X)\ll X^\eps (X^{(s+1)/2}+X^{s+1-k(k+1)/2}).
\]
By substituting this estimate into \eqref{2.20}, therefore, we conclude that
\[
I_s([0,1);X)\ll X^{\eps}(X^{(s-1)/2}+X^{s-k(k+1)/2}).
\]
The conclusion of the lemma is now immediate.
\end{proof}

In order to obtain the upper bound \eqref{1.8}, we have only to set $y_1=\ldots =y_s=1$ 
to conclude from \eqref{2.2} and Lemma \ref{lemma2.2} that when $1\le s<k(k+1)$, one 
has
\[
\int_{[0,1)^k}f_k(c_1\bfalp;X)\cdots f_k(c_s\bfalp;X)\d\bfalp \ll X^{(s-1)/2+\eps}.
\]

\par In the next lemma, and throughout the remainder of the paper, we suppose that 
$k\ge 3$ and $\bfc,\bfy\in (\dbZ\setminus \{0\})^s$. Moreover, putting 
$c_0=-(c_1y_1^k+\ldots +c_sy_s^k)$, we suppose that $c_0\ne 0$. We next obtain from 
Lemma \ref{lemma2.1} an estimate of minor arc type. When $1\le Q\le X$, we define a 
one-dimensional Hardy-Littlewood dissection as follows. We define the set of major arcs 
$\grM(Q)$ to be the union of the arcs
\[
\grM(q,a)=\{ \alp\in [0,1):|q\alp-a|\le QX^{-k}\},
\]
with $0\le a\le q\le Q$ and $(a,q)=1$, and then write $\grm(Q)=[0,1)\setminus \grM(Q)$ 
for the corresponding set of minor arcs.\par

Next, when $k$ is an integer with $k\ge 2$, we define the exponent $\sig=\sig(k)$ by 
taking
\[
\sig(k)^{-1}=\begin{cases} 2^{k-1},&\text{when $2\le k\le 5$},\\
k(k-1),&\text{when $k\ge 6$}.\end{cases}
\]
Then, when $k\ge 2$ and $1\le Q\le X$, one has
\begin{equation}\label{2.21}
\sup_{\alp_k\in \grm(Q)}\sup_{\bfalp_{k-1}\in [0,1)^{k-1}}|f_k(\bfalp_k;X)|\ll 
X^{1+\eps}Q^{-\sig(k)}.
\end{equation}
The reader may consult \cite[Lemma 2.2]{Woo2022c} for a proof of this conclusion, which 
makes use of the standard literature.

\begin{lemma}\label{lemma2.3}
When $1\le Q\le X$ and $s\ge k(k+1)$, one has
\[
I_s(\grm(Q);X)\ll X^{s-\tfrac{1}{2}k(k+1)+\eps}Q^{-\sig(k)}.
\]
\end{lemma}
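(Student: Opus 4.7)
The plan is to apply Lemma~\ref{lemma2.1} with $\grB=\grm(Q)$ and then estimate the resulting mean values $\grJ_{s+1,k}(c_i\grm(Q);X)$ by combining the Weyl-type supremum bound \eqref{2.21} with the (now proved) main conjecture in Vinogradov's mean value theorem. Specifically, Lemma~\ref{lemma2.1} immediately supplies
\[
I_s(\grm(Q);X)\ll X^{\eps-1}\prod_{i=0}^s \grJ_{s+1,k}(c_i\grm(Q);X)^{1/(s+1)},
\]
the factor $(\log X)^{s+1}$ having been absorbed into $X^\eps$.

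For each $i$ with $0\le i\le s$, I would extract a supremum from one factor of $|f_k|$ and handle the remaining $|f_k|^s$ via Vinogradov. To control the supremum, observe that modulo $1$ one has the inclusion $c_i\grm(Q)\subseteq \grm(Q/|c_i|)$: for if $\beta\in\grm(Q)$ and $\alp\equiv c_i\beta\pmod 1$ were to lie in some major arc $\grM(q,a)$ with $q\le Q/|c_i|$, then after reducing the implied rational approximation $a'/(c_iq)$ to lowest terms, $\beta$ itself would lie in $\grM(q'',a'')$ with $q''\le |c_i|q\le Q$, contradicting $\beta\in\grm(Q)$. Consequently \eqref{2.21} with $Q$ replaced by $Q/|c_i|$ yields
\[
\sup_{\alp_k\in c_i\grm(Q)}\sup_{\bfalp_{k-1}\in [0,1)^{k-1}}|f_k(\bfalp;X)|\ll X^{1+\eps}Q^{-\sig(k)},
\]
the fixed constant $|c_i|^{\sig(k)}$ being absorbed. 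The residual integral, by $1$-periodicity of $|f_k|^s$ in $\alp_k$ and a crude change of variable, is bounded by
\[
\int_{c_i\grm(Q)}\int_{[0,1)^{k-1}}|f_k(\bfalp;X)|^s\,\d\bfalp_k \ll \int_{[0,1)^k}|f_k(\bfalp;X)|^s\,\d\bfalp \ll X^{s-k(k+1)/2+\eps},
\]
the last step being the main conjecture in Vinogradov's mean value theorem applied with exponent $s\ge k(k+1)$. Multiplying these estimates gives $\grJ_{s+1,k}(c_i\grm(Q);X)\ll X^{s+1-k(k+1)/2+\eps}Q^{-\sig(k)}$ for every $i$.

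Feeding these bounds back into the product from Lemma~\ref{lemma2.1}, all $s+1$ factors are of the same shape and recombine to yield
\[
I_s(\grm(Q);X)\ll X^{\eps-1}\cdot X^{s+1-k(k+1)/2+\eps}Q^{-\sig(k)}\ll X^{s-k(k+1)/2+\eps}Q^{-\sig(k)},
\]
as claimed. I expect the only real technical point to be verifying the rescaling inclusion $c_i\grm(Q)\subseteq \grm(Q/|c_i|)$ up to periodicity, which is what allows the coefficients $c_i$ to be cleanly absorbed without costing a power of~$Q$; the remainder is a transparent combination of Lemma~\ref{lemma2.1}, the Weyl estimate \eqref{2.21}, and Vinogradov's theorem.
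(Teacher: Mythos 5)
Your proof is correct and follows essentially the same route as the paper's: apply Lemma~\ref{lemma2.1} with $\grB=\grm(Q)$, use the mod-$1$ inclusion $c_i\grm(Q)\subseteq\grm(Q/|c_i|)$ to invoke the Weyl-type supremum bound \eqref{2.21}, and then bound the remaining $s$-th moment by the main conjecture in Vinogradov's mean value theorem. The only cosmetic difference is that the paper states the inclusion as an ``elementary exercise'' rather than spelling out the lowest-terms argument you give, and it carries the bound in the form $X^{1+\eps}Q^{-\sig(k)}(X^{s/2}+X^{s-k(k+1)/2})$ before simplifying using $s\ge k(k+1)$.
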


\begin{proof} We apply Lemma \ref{lemma2.1} to obtain the bound
\begin{equation}\label{2.22}
I_s(\grm(Q);X)\ll X^{\eps-1}\prod_{i=0}^s\grJ_{s+1,k}(c_i\grm(Q);X)^{1/(s+1)}.
\end{equation}
Here, in view of the definition \eqref{2.3}, we have
\[
\grJ_{s+1,k}(c_i\grm(Q);X)=\int_{c_i\grm(Q)}\int_{[0,1)^{k-1}}
|f_k(\bfalp;X)|^{s+1}\d\bfalp_k.
\]
An elementary exercise confirms that $c_i\grm(Q)\subseteq \grm(Q/|c_i|)\mmod{1}$, and 
hence we deduce from \eqref{2.21} that
\[
\sup_{\alp_k\in c_i\grm(Q)}\sup_{\bfalp_{k-1}\in [0,1)^{k-1}}|f_k(\bfalp_k;X)|\ll 
X^{1+\eps}Q^{-\sig(k)}.
\]
Thus, we find that
\[
\grJ_{s+1,k}(c_i\grm(Q);X)\ll X^{1+\eps}Q^{-\sig(k)}\int_{[0,1)^k}
|f_k(\bfalp;X)|^s\d\bfalp_k.
\]
By applying the (now confirmed) main conjecture in Vinogradov's mean value theorem (see 
\cite{BDG2015, Woo2016a, Woo2019}) once again, we therefore conclude that
\[
\grJ_{s+1,k}(c_i\grm(Q);X)\ll X^{1+\eps}Q^{-\sig(k)}(X^{s/2}+X^{s-k(k+1)/2}).
\]
The conclusion of the lemma follows by substituting this upper bound into \eqref{2.22}.
\end{proof}

The conclusion of Lemma \ref{lemma2.3} is neither quite sufficient, by itself, to deliver the 
bound \eqref{1.9} of Theorem \ref{theorem1.2}, nor the key minor arc input into Theorem 
\ref{theorem1.1}. However, it does provide a bound for the most difficult region of the 
minor arcs. Our goal in \S\S 3 and 4 is to handle the remaining parts of the minor arcs in 
the Hardy-Littlewood dissection.

\section{A generalised minor arc estimate}
Our goal in this section is to lay the foundations for an application of the Hardy-Littlewood 
method capable of delivering the estimate \eqref{1.9} of Theorem \ref{theorem1.2}, as 
well as the conclusion of Theorem \ref{theorem1.1}. To this end we introduce a 
Hardy-Littlewood dissection. First, as a close relative of the mean value $I_s(\grB;X)$ 
introduced in \eqref{2.2}, we define the mean value $T_s(\grA)=T_s(\grA;X)$ for 
measurable sets $\grA\subseteq [0,1)^k$ by writing
\begin{equation}\label{3.1}
T_s(\grA;X)=\int_\grA f(c_1\bfbet(y_1))\cdots f(c_s\bfbet(y_s))\d\bfalp . 
\end{equation}

\par Next, when $1\le Z\le X$, we denote by $\grK(Z)$ the union of the major arcs
\[
\grK(q,\bfa;Z)=\{ \bfalp\in [0,1)^k:\text{$|\alp_j-a_j/q|\le ZX^{-j}$ $(1\le j\le k)$}\},
\]
with $1\le q\le Z$, $0\le a_j\le q$ $(1\le j\le k)$ and $(q,\bfa)=1$, and we define the 
complementary set of minor arcs by putting $\grk(Z)=[0,1)^k\setminus \grK(Z)$. We have 
already defined the one-dimensional Hardy-Littlewood dissection of $[0,1)$ into sets of arcs 
$\grM=\grM(Q)$ and $\grm=\grm(Q)$. We now fix $L=X^{1/(8k^2)}$ and $Q=L^k$, and 
we define $k$-dimensional sets of arcs by taking $\grN=\grK(Q^2)$ and $\grn=\grk(Q^2)$. 
We also need the narrow set of major arcs $\grP=\grK(L)$, and the complementary set of 
minor arcs $\grp=\grk(L)$. In this last dissection, it is convenient to abbreviate 
$\grK(q,\bfa;L)$ to $\grP(q,\bfa)$. We note that this last set of major and minor arcs 
coincide with those defined in the preamble to the statement of Theorem \ref{theorem1.2}.

\par We partition the set of points $(\alp_1,\ldots ,\alp_k)$ lying in $[0,1)^k$ into four 
disjoint subsets, namely
\begin{align*}
\grW_1&=[0,1)^{k-1}\times \grm ,\\
\grW_2&=([0,1)^{k-1}\times \grM)\cap \grn ,\\
\grW_3&=([0,1)^{k-1}\times \grM)\cap (\grN\setminus \grP),\\
\grW_4&=\grP .
\end{align*}
Noting that $\grP\subseteq [0,1)^{k-1}\times \grM$, it follows that 
$[0,1)^k=\grW_1\cup \ldots \cup \grW_4$. Hence, by orthogonality, we infer that
\begin{equation}\label{3.2}
N_{s,k}(X;\bfy)=T_s([0,1)^k)=\sum_{i=1}^4T_s(\grW_i),
\end{equation}
and further that
\begin{equation}\label{3.3}
\int_\grp f(c_1\bfbet(y_1))\cdots f(c_s\bfbet(y_s))\d\bfalp =\sum_{i=1}^3T_s(\grW_i).
\end{equation}

\par The work of \S2 already permits us to announce a satisfactory upper bound for the 
contribution of the set of arcs $\grW_1$ in \eqref{3.2} and \eqref{3.3}.

\begin{lemma}\label{lemma3.1}
When $s\ge k(k+1)$, one has
\[
T_s(\grW_1)\ll X^{s-\tfrac{1}{2}k(k+1)-1/(8k^3)}.
\]
\end{lemma}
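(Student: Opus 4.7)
The plan is to identify $T_s(\grW_1)$ with the minor arc mean value $I_s(\grm;X)$ already estimated in Lemma \ref{lemma2.3}, and then substitute the specific value $Q=L^k=X^{1/(8k)}$ fixed at the start of this section. Comparing the definition (3.1) of $T_s$ with the definition (2.2) of $I_s$, the set $\grW_1=[0,1)^{k-1}\times \grm$ is precisely the domain for which (3.1) reduces to (2.2) with $\grB=\grm=\grm(Q)$, so that
\[
T_s(\grW_1)=I_s(\grm;X).
\]
Since $c_0=-n\ne 0$ under the hypotheses of Theorem \ref{theorem1.1}, the standing conventions recorded in the preamble to Lemma \ref{lemma2.3} are in force, and a direct application of that lemma yields
\[
T_s(\grW_1)\ll X^{s-\tfrac{1}{2}k(k+1)+\eps-\sig(k)/(8k)}.
\]

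The only remaining step is a numerical check: one must verify that $\sig(k)/(8k)>1/(8k^3)$, that is $\sig(k)>1/k^2$, strictly for every integer $k\ge 3$, so that $\eps$ may be chosen small enough to absorb the $X^\eps$ factor while still delivering the saving $X^{-1/(8k^3)}$. For $k\ge 6$ the inequality is immediate from $\sig(k)=1/(k(k-1))>1/k^2$. For the three remaining cases $k\in \{3,4,5\}$, the inequality $\sig(k)=2^{-(k-1)}>1/k^2$ is equivalent to $2^{k-1}<k^2$, which reads $4<9$, $8<16$, and $16<25$ respectively; each holds with positive gap, permitting the desired absorption.

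I anticipate no serious obstacle: the analytic work—translation invariance of the Vinogradov system encoded in Lemma \ref{lemma2.1}, the minor-arc supremum bound (2.21), and the resolved main conjecture in Vinogradov's mean value theorem invoked in the proof of Lemma \ref{lemma2.3}—has already been carried out. The lemma is the concrete instantiation of Lemma \ref{lemma2.3} at the minor-arc scale $Q=X^{1/(8k)}$, combined with the elementary observation that this choice is calibrated so that the saving $Q^{-\sig(k)}$ comfortably exceeds the loss $X^\eps$ by the required margin $X^{-1/(8k^3)}$ in all cases $k\ge 3$.
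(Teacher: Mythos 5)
Your proof is correct and follows precisely the paper's own route: identify $T_s(\grW_1)$ with $I_s(\grm(Q);X)$, apply Lemma \ref{lemma2.3} with $Q = L^k = X^{1/(8k)}$, and absorb the $X^\eps$ factor using the strict inequality $\sig(k) > 1/k^2$ for $k \ge 3$. Your explicit case checks ($2^{k-1} < k^2$ for $k=3,4,5$ and $k(k-1) < k^2$ for $k \ge 6$) merely spell out what the paper states without detail.
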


\begin{proof} We observe that
\[
T_s(\grW_1)=I_s(\grm(Q);X).
\]
Thus, on substituting $Q=X^{1/(8k)}$ into Lemma \ref{lemma2.3}, noting that 
$\sig(k)>1/k^2$ for $k\ge 3$, the conclusion of the lemma is immediate.
\end{proof}

\section{Further minor arc estimates}
We next estimate the contributions arising from the sets of arcs $\grW_2$ and $\grW_3$ 
within \eqref{3.2} and \eqref{3.3}. We begin with an estimate of Weyl-type for the 
exponential sum $f(c_i\bfbet(y_i))$ $(1\le i\le s)$.

\begin{lemma}\label{lemma4.1}
Suppose that $1\le i\le s$ and $c_iy_i\ne 0$. Then
\[
\sup_{\bfalp\in \grn}|f(c_i\bfbet(y_i))|\ll X^{1-1/(6k^2)}\quad \text{and}\quad 
\sup_{\bfalp \in \grp}|f(c_i\bfbet(y_i))|\ll X^{1-1/(12k^3)}.
\]
\end{lemma}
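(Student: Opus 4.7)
The plan is to derive both bounds from the Weyl-type estimate \eqref{2.21} applied to the polynomial exponential sum $f(c_i\bfbet(y_i);X)=f_k(\bftet;X)$, where $\bftet=c_i\bfbet(y_i)$ has $j$-th coordinate $c_iy_i^{k-j}\alp_j$ and, in particular, leading coefficient $\tet_k=c_i\alp_k$. Since \eqref{2.21} gives $|f_k(\bftet;X)|\ll X^{1+\eps}R^{-\sig(k)}$ whenever $c_i\alp_k\in\grm(R)$, the task reduces to showing that the multi-dimensional minor arc hypothesis on $\bfalp$ forces $c_i\alp_k$ to lie in a suitable one-dimensional minor arc.

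My approach would proceed by contradiction, via the standard multi-dimensional Weyl-type inversion (proved by iterated Weyl differencing for $k\le 5$ and by Vinogradov's mean value theorem for $k\ge 6$, both available in the literature cited). The contrapositive of this inversion asserts that if $|f_k(\bftet;X)|>X^{1-\rho+\eps}$, then there exist $q$ and integers $a_1,\ldots,a_k$ with $(q,a_1,\ldots,a_k)=1$, $q$ polynomially controlled in $X^{\rho/\sig(k)}$, and $|q\tet_j-a_j|\ll qX^{\rho/\sig(k)-j}$ \emph{simultaneously} for $1\le j\le k$. After dividing out the fixed factors $c_iy_i^{k-j}$—absorbed into the implicit constant thanks to the hypothesis $c_iy_i\ne 0$—this delivers a single rational approximation to the tuple $\bfalp$ with common denominator $\le Q^2$ when one chooses $\rho=1/(6k^2)$, or $\le L$ when one chooses $\rho=1/(12k^3)$. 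Either case places $\bfalp$ in $\grN=\grK(Q^2)$ or in $\grP=\grK(L)$, contradicting the hypothesis $\bfalp\in\grn$ or $\bfalp\in\grp$ respectively, and thereby yielding the two stated bounds.

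The main obstacle is the \emph{simultaneous} character of the Weyl inversion. A naive attempt using only the standard single-coefficient Weyl inequality on $\tet_k$ combined with coordinate-by-coordinate Dirichlet approximations for $\alp_1,\ldots,\alp_{k-1}$ is doomed: the least common multiple of the resulting denominators easily overshoots $Q^2$, and the error in the approximation to $\alp_k$ (being of order $R/X^k$ with $R=X^{1/(6k^2\sig(k))}$) exceeds the target $Q^2/X^k$. It is therefore essential that the inversion produce all $k$ approximations with one shared small denominator, which is precisely the feature provided by the multi-dimensional Weyl/Vinogradov inversion of the appropriate strength.
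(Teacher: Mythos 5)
Your second paragraph is essentially the paper's argument. The paper invokes \cite[Theorem 1.6]{Woo2012a} with $\tau=1/(6k^2)$, $\delta=1/(5k)$ (respectively $\tau=1/(12k^3)$, $\delta=1/(10k^2)$): whenever $|f(c_i\bfbet(y_i))|\ge X^{1-\tau}$ there exist $1\le q\le X^\delta$ and $\bfa\in\dbZ^k$ with $|c_iq\bet_j(y_i)-a_j|\le X^{\delta-j}$ simultaneously, and then converts this into a simultaneous approximation to $\bfalp$ itself via the explicit common denominator $r=|c_iy_i^{k-1}|q$ and integer numerators $b_j=a_j|c_iy_i^{k-1}|/(c_iy_i^{k-j})$ (integrality holds because $y_i^{k-j}\mid y_i^{k-1}$), showing $\bfalp\in\grN\mmod 1$ (resp.\ $\grP\mmod 1$) once $X$ is large, a contradiction. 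Two small remarks. First, your opening plan of reducing to (2.21) is a dead end for a more basic reason than the one in your last paragraph: membership in the $k$-dimensional minor arcs $\grn$ or $\grp$ places no constraint on $\alp_k$ alone, since the obstruction to good rational approximation may sit in any coordinate, so there is nothing forcing $c_i\alp_k\in\grm(R)$; your self-correction to the simultaneous inversion is exactly right. Second, ``absorbing $c_iy_i^{k-j}$ into the implicit constant'' is slightly too glib — one must genuinely manufacture an integral pair $(r,\bfb)$ approximating $\bfalp$, which is what the explicit $r$ and $b_j$ achieve at the cost of a bounded inflation of the denominator; this is absorbed by building in a gap between $\delta$ and the thresholds defining $\grN$ and $\grP$ (the paper uses $\delta=1/(5k)<\delta'=1/(4k)$, so that $r\le Q^2=X^{1/(4k)}$).
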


\begin{proof} We begin by confirming the first bound. Put $\tau=1/(6k^2)$ and 
$\del =1/(5k)$. Since $\tau^{-1}>4k(k-1)$ and $\del>k\tau$, we find from 
\cite[Theorem 1.6]{Woo2012a} that whenever $|f(c_i\bfbet(y_i))|\ge X^{1-\tau}$, there 
exist $q\in \dbN$ and $\bfa\in \dbZ^k$ having the property that
\[
1\le q\le X^\del\qquad \text{and}\qquad |c_iq\bet_j(y_i)-a_j|\le X^{\del-j}\quad 
(1\le j\le k).
\]
Write
\[
r=|c_iy_i^{k-1}|q\quad \text{and}\quad b_j=\frac{a_j|c_iy_i^{k-1}|}{c_iy_i^{k-j}}\quad 
(1\le j\le k).
\]
Then, on recalling from \eqref{2.1} that we have $\bet_j(y_i)=y_i^{k-j}\alp_j$ 
$(1\le j\le k)$, we see that when $X$ is sufficiently large in terms of $\bfy$, one has
\[
|r\alp_j-b_j|\le |y_i|^{j-1}X^{\del-j}\le X^{\del'-j}\quad (1\le j\le k),
\]
in which we have written $\del'=1/(4k)$. In particular, we see that $r\le Q^2$ and 
$|\alp_j-b_j/r|\le Q^2X^{-j}$ $(1\le j\le k)$, and hence $\bfalp\in \grN\mmod{1}$. We 
therefore infer that whenever $X$ is sufficiently large in terms of $k$, and $\bfalp\in \grn$, 
then one must have $|f(c_i\bfbet(y_i))|\ll X^{1-\tau}$ $(1\le i\le s)$, and the first 
conclusion of the lemma follows.\par

In order to confirm the second bound, we put $\tau=1/(12k^3)$ and $\del=1/(10k^2)$. We 
again have $\tau^{-1}>4k(k-1)$ and $\del>k\tau$, and so the same argument applies 
mutatis mutandis. Thus, whenever $|f(c_i\bfbet(y_i))|\ge X^{1-\tau}$, one deduces that 
$\bfalp \in \grP\mmod{1}$. Consequently, when $X$ is sufficiently large in terms of $k$ and 
$\bfy$, and $\bfalp\in \grp$, then one must have $|f(c_i\bfbet(y_i))|\ll X^{1-\tau}$ 
$(1\le i\le s)$. This delivers the second conclusion and completes the proof of the lemma.
\end{proof}

By combining this Weyl-type estimate with the conclusion of Lemma \ref{lemma2.1}, we 
obtain a satisfactory estimate for $T_s(\grW_2)$ by exploiting the observation that 
$\grW_2$ has small measure.

\begin{lemma}\label{lemma4.2}
When $c_iy_i\ne 0$ $(1\le i\le s)$ and $s\ge k(k+1)$, one has
\[
T_s(\grW_2)\ll X^{s-\tfrac{1}{2}k(k+1)-1/(16k)}.
\]
\end{lemma}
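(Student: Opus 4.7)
The strategy is to combine the Weyl-type bound of Lemma~\ref{lemma4.1}, valid on $\grn \supseteq \grW_2$, with the averaged mean value estimate of Lemma~\ref{lemma2.1} applied on the one-dimensional major arc set $\grM$, while exploiting the small measure of $\grM$ (which is $\ll X^{1/(4k) - k}$ for our choice $Q = X^{1/(8k)}$). Since $\grW_2 \subseteq \grn$, Lemma~\ref{lemma4.1} supplies the uniform pointwise bound $|f(c_i\bfbet(y_i))| \ll X^{1 - 1/(6k^2)}$ on $\grW_2$ for each index $i$.

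My first step is to extract one such factor, say $|f(c_1\bfbet(y_1))|$, via the Weyl bound, after a preliminary reordering of indices chosen so that the reduced sub-sum $c_0' = -(c_2y_2^k + \cdots + c_sy_s^k)$ is non-zero. Such a reordering always exists when $c_0 \ne 0$: were every $(s-1)$-term sub-sum to vanish, pairwise differences would force all $c_iy_i^k$ to coincide, which would in turn force $c_0=0$, a contradiction. After this reduction,
\[
|T_s(\grW_2)| \ll X^{1 - 1/(6k^2)} \int_{\grW_2} \prod_{i=2}^s |f(c_i\bfbet(y_i))| \, \d\bfalp.
\]

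Next, I would extend the integration from $\grW_2$ to the larger region $[0,1)^{k-1} \times \grM$ and apply Lemma~\ref{lemma2.1} with $s-1$ variables and $\grB = \grM$. The output expresses the bound in terms of mean values $\grJ_{s,k}(c_i'\grM; X)$, each of which is dominated by the full mean value $\grJ_{s,k}([0,1); X)$. By the (now confirmed) main conjecture in Vinogradov's mean value theorem, one has $\grJ_{s,k}([0,1); X) \ll X^{s - k(k+1)/2 + \eps}$, which is applicable since $s \ge k(k+1)$. Together with the factor $X^{-1+\eps}$ coming from Lemma~\ref{lemma2.1}, this yields a preliminary estimate of the shape $|T_s(\grW_2)| \ll X^{s - k(k+1)/2 - 1/(6k^2) + \eps}$.

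The main obstacle will be to sharpen the exponent from $1/(6k^2)$ to the claimed $1/(16k)$, which is the stronger bound once $k \ge 3$. The additional saving must come from the small measure of $\grM$, which so far has been used only trivially through the extension $\grW_2 \hookrightarrow [0,1)^{k-1} \times \grM$. I would attempt to recover the sharper exponent by applying a carefully tuned H\"older inequality that trades the Weyl bound on several factors against an $L^p$-type mean value bound weighted by $\text{meas}(\grM)$, while respecting the Vinogradov critical threshold of $k(k+1)$ variables. The delicate balancing of the three ingredients — Weyl savings, Vinogradov's mean value theorem at the critical exponent, and the major arc measure — is the technical heart of the argument.
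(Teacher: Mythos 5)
Your proposal diverges from the paper's argument, and as written contains a gap that would need to be repaired.

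The main difficulty is your use of Lemma~\ref{lemma2.1}. After extracting one Weyl-type factor you are left with
\[
\int_{\grW_2}\prod_{i=2}^s|f(c_i\bfbet(y_i))|\,\d\bfalp,
\]
and you propose to dominate this by the analogue of $I_{s-1}(\grM;X)$ and then invoke Lemma~\ref{lemma2.1}. But $I_{s-1}(\grM;X)$ is the complex-valued integral of the product of the exponential sums themselves, not of their absolute values; the proof of Lemma~\ref{lemma2.1} depends crucially on orthogonality and the translation invariance of the phases, and does not survive the insertion of absolute value signs. One could try to restore even powers via H\"older, but the resulting diagonal mean values $\int|f(c_i\bfbet(y_i))|^{s-1}$ correspond to coefficient vectors summing to zero, so the hypothesis $c_0\ne 0$ of Lemma~\ref{lemma2.1} fails. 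Beyond this, you candidly acknowledge that even a clean execution of your scheme yields a saving of only $X^{-1/(6k^2)}$, which is weaker than the required $X^{-1/(16k)}$ once $k\ge 3$, and the proposed remedy is left as a vague appeal to a ``carefully tuned H\"older inequality''; that is precisely the technical content that needs to be supplied.

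The paper's proof is more direct and avoids Lemma~\ref{lemma2.1} entirely for this piece. By H\"older it suffices to bound $\int_\grM I_i(\alp_k)\,\d\alp_k$ for each $i$, where $I_i(\alp_k)=\int_{\grL(\alp_k)}|f(c_i\bfbet(y_i))|^s\,\d\alp_1\cdots\d\alp_{k-1}$ and $\grL(\alp_k)$ is the slice of $\grW_2$ at height $\alp_k$. One then splits the exponent $s=(s-k(k+1))+2k+k(k-1)$: the first block gets the trivial bound $X$, the next $2k$ factors get the Weyl bound $X^{1-1/(6k^2)}$ valid on $\grn\supseteq\grW_2$, and the remaining $k(k-1)$ factors are integrated over $\alp_1,\ldots,\alp_{k-1}$ with $\alp_k$ fixed, which by orthogonality is a Vinogradov mean value of \emph{degree $k-1$} at its critical exponent $k(k-1)$, contributing $X^{k(k-1)/2+\eps}$ uniformly in $\alp_k$. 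Finally $\text{mes}(\grM)\ll Q^2X^{-k}$ supplies the saving in the $\alp_k$ integration. Multiplying these together gives $X^{s-k(k+1)/2+\eps}\cdot X^{-1/(3k)}\cdot Q^2=X^{s-k(k+1)/2-1/(12k)+\eps}\ll X^{s-k(k+1)/2-1/(16k)}$. The key observation you are missing is the passage to the degree-$(k-1)$ Vinogradov system after freezing $\alp_k$; this is what makes the numerology close.
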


\begin{proof} When $\alp_k\in \grM$, define
\[
\grL(\alp_k)=\{ (\alp_1,\ldots ,\alp_{k-1})\in [0,1)^{k-1}: \bfalp\in \grW_2\}.
\]
Then an application of H\"older's inequality leads from \eqref{3.1} to the upper 
bound
\begin{equation}\label{4.1}
T_s(\grW_2)\le \prod_{i=1}^s \biggl( \int_\grM I_i(\alp_k)\d\alp_k \biggr)^{1/s},
\end{equation}
where
\[
I_i(\alp_k)=\int_{\grL(\alp_k)}|f(c_i\bfbet(y_i))|^s\d\alp_1\cdots \d\alp_{k-1}.
\]
Noting that $\grW_2\subseteq \grn$, applying the trivial estimate 
$|f(c_i\bfbet(y_i))|\le 2X+1$, and writing $v=k(k-1)/2$, we deduce that
\begin{equation}\label{4.2}
I_i(\alp_k)\le X^{s-k(k+1)}\Bigl( \sup_{\bfalp\in \grn}|f(c_i\bfbet(y_i))|\Bigr)^{2k}
\int_{[0,1)^{k-1}}|f(c_i\bfbet(y_i))|^{2v}\d\alp_1\cdots \d\alp_{k-1} .
\end{equation}
By orthogonality, the mean value here counts the integral solutions of the system of 
equations
\[
c_iy_i^{k-j}\sum_{l=1}^v(x_l^j-y_l^j)=0\quad (1\le j\le k-1),
\]
with $1\le \bfx,\bfy\le X$, each solution being counted with the unimodular weight
\[
e(c_i\alp_k(x_1^k-y_1^k+\ldots +x_v^k-y_v^k)).
\]
Thus, applying the (now proven) main conjecture in Vinogradov's mean value theorem (see 
\cite{BDG2015, Woo2016a, Woo2019}), we find that one has the bound
\[
\int_{[0,1)^{k-1}}|f(c_i\bfbet(y_i))|^{2v}\d\alp_1\cdots \d\alp_{k-1}\ll 
\grJ_{2v,k-1}([0,1);X)\ll X^{v+\eps},
\]
uniformly in $\alp_k$.\par

Making use of the latter bound, we deduce from \eqref{4.2} via Lemma \ref{lemma4.1} 
that
\[
I_i(\alp_k)\ll X^{s-k(k+1)}\left( X^{1-1/(6k^2)}\right)^{2k}X^{v+\eps}.
\]
Moreover, we have $\text{mes}(\grM)\ll Q^2X^{-k}$. Consequently, we infer that
\begin{align*}
\int_\grM I_i(\alp_k)\d\alp_k&\ll X^{s-\tfrac{1}{2}k(k+1)+\eps}(X^{k-1/(3k)})(Q^2X^{-k})
\\
&\ll X^{s-\tfrac{1}{2}k(k+1)+\eps}(Q^2X^{-1/(3k)}).
\end{align*}
Since $Q^2=X^{1/(4k)}$, we conclude that
\[
\int_\grM I_i(\alp_k)\d\alp_k\ll X^{s-\tfrac{1}{2}k(k+1)-1/(16k)}.
\]
The conclusion of the lemma follows by substituting this bound into \eqref{4.1}.
\end{proof}

The analysis of the set of arcs $\grW_3$ requires standard major arc estimates from the 
theory of Vinogradov's mean value theorem.

\begin{lemma}\label{lemma4.3}
Suppose that $u>\tfrac{1}{2}k(k+1)+2$ and $c_iy_i\ne 0$ for $1\le i\le s$. Then
\[
\int_\grN |f(c_i\bfbet (y_i))|^u\d\bfalp \ll X^{u-k(k+1)/2}.
\]
\end{lemma}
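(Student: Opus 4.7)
The plan is to reduce the integral to a classical major-arc mean value for the standard Weyl sum $f_k$ via a linear substitution, and then invoke the asymptotic formula machinery developed in the context of Vinogradov's mean value theorem.

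First I would set $\gam_j=c_iy_i^{k-j}\alp_j\bmod 1$, so that $f(c_i\bfbet(y_i);X)=f_k(\bfgam;X)$. Since $c_iy_i\ne 0$, the map $\phi_i:\bfalp\mapsto\bfgam$ is an integer-coefficient covering map of the torus $(\dbR/\dbZ)^k$ of degree $N=|c_i|^k|y_i|^{k(k-1)/2}$, a constant depending only on $\bfc$ and $\bfy$. A routine Fubini-style argument, weighting each preimage by the reciprocal Jacobian, yields
\[
\int_\grN|f(c_i\bfbet(y_i);X)|^u\d\bfalp\le \int_{\phi_i(\grN)}|f_k(\bfgam;X)|^u\d\bfgam.
\]
Next I would verify that $\phi_i(\grN)$ lies in an enlarged major-arc set: if $\bfalp\in\grK(q,\bfa;Q^2)$, then each $\gam_j$ differs from $c_iy_i^{k-j}a_j/q\bmod 1$ by at most $|c_iy_i^{k-j}|Q^2X^{-j}\ll_{\bfc,\bfy}Q^2X^{-j}$. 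Reducing the rationals to lowest terms and taking a common denominator across $j$ produces some $q'\le q\le Q^2$, so $\phi_i(\grN)\subseteq \grK(CQ^2)$ for a constant $C=C(\bfc,\bfy)$.

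It then suffices to establish the classical bound
\[
\int_{\grK(Z)}|f_k(\bfgam;X)|^u\d\bfgam\ll X^{u-k(k+1)/2},
\]
valid for $u>k(k+1)/2+2$ and any $Z$ comfortably below $X^{1/(2k)}$; the choice $Z=CQ^2=CX^{1/(4k)}$ amply satisfies this. The argument follows the standard template: on each arc $\grK(q,\bfa;Z)$ one approximates $f_k(\bfgam;X)$ by the pointwise asymptotic $q^{-1}S(q,\bfa)v_k(\bfxi;X)$, where $\bfxi=\bfgam-\bfa/q$ and $S(q,\bfa)=\sum_{r=0}^{q-1}e(\sum_j a_jr^j/q)$; one then rescales via $\eta_j=X^j\xi_j$, picking up a factor $X^{-k(k+1)/2}$ from the Jacobian against a factor $X^u$ from $|v_k(\bfxi;X)|^u$, which yields the claimed exponent.

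The hard part, and the place where the hypothesis $u>k(k+1)/2+2$ enters, will be the convergence of the singular integral
\[
\int_{\dbR^k}|V_k(\bfeta)|^u\d\bfeta,\qquad V_k(\bfeta)=\int_{-1}^1 e(\eta_1t+\ldots+\eta_kt^k)\d t,
\]
together with that of the associated singular series $\sum_q\sum_\bfa(q^{-1}|S(q,\bfa)|)^u$. Both convergences are classical, obtained via stationary phase on $V_k$ (after a dyadic partition according to the size of $|\bfeta|$) and averaging bounds on $|S(q,\bfa)|^u$ combined with $|S(q,\bfa)|\ll q^{1-1/k+\eps}$, and they are comfortably met under our hypothesis on $u$.
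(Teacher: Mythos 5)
Your proposal is correct and follows essentially the same route as the paper: one observes that $\bfalp\mapsto c_i\bfbet(\bfalp;y_i)\bmod 1$ maps $\grN$ into a modestly enlarged set of major arcs $\grK(CQ^2)$ (the paper uses $\grK(Q^{2+\eps})$), changes variables using periodicity modulo $1$, and then invokes the classical major-arc mean value estimate $\int_{\grK(Z)}|f_k|^u\d\bfgam\ll X^{u-k(k+1)/2}$, which the paper simply cites as \cite[Lemma 7.1]{Woo2017} whereas you sketch its standard proof.
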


\begin{proof} Suppose that $\bfalp \in \grN$. Then there exist $q\in \dbN$ and 
$\bfa\in \dbZ^k$ for which $(q,\bfa)=1$,
\[
1\le q\le Q^2\quad \text{and}\quad 0\le a_j\le q\quad (1\le j\le k),
\] 
and such that
\[
|\alp_j-a_j/q|\le Q^2X^{-j}\quad (1\le j\le k).
\]
In such circumstances, one has
\[
|c_iy_i^{k-j}\alp_j-c_iy_i^{k-j}a_j/q|\le |c_iy_i^{k-j}|Q^2X^{-j}.
\]
Thus, when $X$ is sufficiently large in terms of $\bfy$, we see that 
$c_i\bfbet(y_i)\in \grK(Q^{2+\eps})\mmod{1}$. Hence, applying periodicity modulo $1$, we 
have
\[
\int_\grN |f(c_i\bfbet (y_i))|^u\d\bfalp \ll \int_{\grK(Q^{2+\eps})}|f(\bfbet)|^u\d \bfbet .
\]
From here we may apply \cite[Lemma 7.1]{Woo2017}, observing that the set of major arcs 
$\grK(Q^{2+\eps})$ is a subset of the major arcs employed in the latter source. Thus, in 
particular, one has
\[
\int_{\grK(Q^{2+\eps})}|f(\bfbet)|^u\d\bfbet \ll X^{u-k(k+1)/2},
\]
and the conclusion of the lemma follows.
\end{proof}

\begin{lemma}\label{lemma4.4}
When $c_iy_i\ne 0$ $(1\le i\le s)$ and $s\ge k(k+1)$, one has
\[
T_s(\grW_3)\ll X^{s-\tfrac{1}{2}k(k+1)-1/(12k^2)}.
\]
\end{lemma}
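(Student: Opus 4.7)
The plan is to exploit both features of $\grW_3$ simultaneously: since $\grW_3\subseteq\grp$, the second bound of Lemma~\ref{lemma4.1} applies pointwise to each integrand $f(c_i\bfbet(y_i))$, giving the Weyl-type saving $X^{1-1/(12k^3)}$ per factor; and since $\grW_3\subseteq\grN$, the mean-value estimate of Lemma~\ref{lemma4.3} becomes available once the integration domain is enlarged from $\grW_3$ to $\grN$.

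First I would fix an integer $u$ satisfying $u>\tfrac12 k(k+1)+2$ (so that Lemma~\ref{lemma4.3} is applicable) and $s-u\ge k$. The hypothesis $s\ge k(k+1)$ together with $k\ge 3$ makes such a choice feasible, as the inequality $k(k-1)\ge 6$ for $k\ge 3$ certifies; in the worst case $s=k(k+1)$ one may take $u=\tfrac12 k(k+1)+3$, leaving exactly $k$ remaining factors, while larger $s$ merely affords more flexibility.

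Next, passing to absolute values and writing $|T_s(\grW_3)|\le \int_{\grW_3}\prod_{i=1}^s|f(c_i\bfbet(y_i))|\d\bfalp$, I would pull out $s-u$ of the factors pointwise using the second estimate of Lemma~\ref{lemma4.1}, then apply H\"older's inequality with $u$ equal exponents (each equal to $u$) to the remaining $u$ factors, and finally enlarge the integration domain from $\grW_3$ to $\grN$. Invoking Lemma~\ref{lemma4.3} for each of the $u$ resulting mean values produces the estimate
\[
T_s(\grW_3)\ll X^{(s-u)(1-1/(12k^3))}\cdot X^{u-\tfrac12 k(k+1)}=X^{s-\tfrac12 k(k+1)-(s-u)/(12k^3)}.
\]
With $s-u\ge k$ the exponent deficit is at least $k/(12k^3)=1/(12k^2)$, which is precisely the saving claimed in the lemma.

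The main obstacle is nothing more than parameter bookkeeping: one must simultaneously satisfy $u>\tfrac12 k(k+1)+2$ (to unlock the major-arc mean-value bound in Lemma~\ref{lemma4.3}) and $s-u\ge k$ (to convert the per-factor $1/(12k^3)$-saving bequeathed by Lemma~\ref{lemma4.1} into the $1/(12k^2)$-saving demanded by the conclusion). The hypothesis $s\ge k(k+1)$ provides exactly the slack required for this calibration when $k\ge 3$, with equality being reached at the threshold $s=k(k+1)$, $k=3$.
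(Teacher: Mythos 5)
Your proposal is correct and uses the same two key ingredients as the paper: the pointwise Weyl bound of Lemma~\ref{lemma4.1} on $\grp\supseteq\grW_3$, and the major-arc moment bound of Lemma~\ref{lemma4.3} after enlarging the integration domain from $\grW_3$ to $\grN$. The only cosmetic difference is the order of operations: the paper applies H\"older with exponent $s$ to all factors first, getting $T_s(\grW_3)\le\prod_{i=1}^s J_i^{1/s}$ with $J_i=\int_{\grW_3}|f(c_i\bfbet(y_i))|^s\d\bfalp$, and then within each $J_i$ pulls out a $k$-th power of the supremum and applies Lemma~\ref{lemma4.3} with $u=s-k$; you pull out $s-u$ factors by supremum first and H\"older the remaining $u$; these manipulations commute to give the same bound (your bookkeeping, with $s-u\ge k$ converting the per-factor $1/(12k^3)$ into the required $1/(12k^2)$, is exactly right). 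One trivial slip: with $u=\tfrac12 k(k+1)+3$ and $s=k(k+1)$, the number of remaining factors is $\tfrac12 k(k+1)-3$, which equals $k$ only when $k=3$ and is larger for $k>3$; but since you only need $s-u\ge k$, this does not affect the argument.
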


\begin{proof} An application of H\"older's inequality conveys us from \eqref{3.1} to the 
upper bound
\begin{equation}\label{4.3}
T_s(\grW_3)\le \prod_{i=1}^sJ_i^{1/s},
\end{equation}
where
\[
J_i=\int_{\grW_3}|f(c_i\bfbet(y_i))|^s\d\bfalp .
\]
Since $\grW_3\subseteq \grN\setminus \grP$, we discern from Lemma \ref{lemma4.1} that
\[
\sup_{\bfalp\in \grW_3}|f(c_i\bfbet(y_i))|\le \sup_{\bfalp\in \grp}|f(c_i\bfbet(y_i))|\ll 
X^{1-1/(12k^3)}.
\]
Thus, taking $u=s-k$ and noting that $u\ge \tfrac{1}{2}k(k+1)+3$, it follows from Lemma 
\ref{lemma4.3} that
\begin{align*}
J_i&\le \Bigl( \sup_{\bfalp\in \grp}|f(c_i\bfbet(y_i))|\Bigr)^k\int_\grN 
|f(c_i\bfbet(y_i))|^u\d\bfalp \\
&\ll \bigl( X^{1-1/(12k^3)}\bigr)^k X^{u-k(k+1)/2}.
\end{align*}
The conclusion of the lemma follows on substituting this bound into \eqref{4.3}.
\end{proof}

\section{The analysis of the major arc contribution}
By substituting the conclusions of Lemmata \ref{lemma3.1}, \ref{lemma4.2} and 
\ref{lemma4.4} into the relation \eqref{3.2}, we find that
\begin{equation}\label{5.1}
N_{s,k}(X;\bfy)=T_s(\grP)+O(X^{s-\tfrac{1}{2}k(k+1)-1/(8k^3)}).
\end{equation}
The goal of this section is to obtain an asymptotic formula for $T_s(\grP)$ that suffices to 
confirm \eqref{1.5}, and hence completes the proof of Theorem \ref{theorem1.1}.\par

We begin by introducing the generating functions
\[
I(\bftet;X)=\int_{-X}^X e(\tet_1\gam+\ldots +\tet_k\gam^k)\d\gam 
\]
and
\[
S(q,\bfa)=\sum_{r=1}^q e_q(a_1r+\ldots +a_kr^k),
\]
in which $e_q(u)$ denotes $e^{2\pi iu/q}$. Recall the notation \eqref{2.1}. When 
$1\le i\le s$, we define
\[
I_i(\bftet;X)=I(c_i\bfbet(\bftet;y_i);X)\quad \text{and}\quad 
S_i(q,\bfa)=S(q,c_i\bfbet(\bfa;y_i)).
\]
Then, when $\bfalp\in \grP(q,\bfa)\subseteq \grP$, we write
\[
V_i(\bfalp;q,\bfa)=q^{-1}S_i(q,\bfa)I_i(\bfalp -\bfa/q;X).
\]

\par Define the function $V_i(\bfalp)$ to be $V_i(\bfalp;q,\bfa)$ when 
$\bfalp\in \grP(q,\bfa)\subseteq \grP$, and to be $0$ otherwise. Then, when 
$\bfalp \in \grP(q,\bfa)\subseteq \grP$, we see from \cite[Theorem 7.2]{Vau1997} that
\[
f(c_i\bfbet(\bfalp;y_i))-V_i(\bfalp;q,\bfa)\ll q+X|q\alp_1-a_1|+\ldots +X^k|q\alp_k-a_k|
\ll L^2,
\]
with the implicit constant in Vinogradov's notation depending at most on $c_i$, $y_i$ and 
$k$. Thus, uniformly for $\bfalp\in \grP$, we have the bound
\[
\prod_{i=1}^s f(c_i\bfbet(\bfalp;y_i))-\prod_{i=1}^sV_i(\bfalp) \ll X^{s-1+1/(4k^2)}.
\]
Write
\[
T_s^*(\grP)=\int_\grP \prod_{i=1}^sV_i(\bfalp)\d\bfalp .
\]
Then since $\text{mes}(\grP)\ll L^{2k+1}X^{-k(k+1)/2}$, we deduce that
\begin{equation}\label{5.2}
T_s(\grP)-T_s^*(\grP)\ll X^{s-k(k+1)/2}L^{-1}.
\end{equation}

\par Next write
\[
\Ome (X;D)=[-DX^{-1},DX^{-1}]\times \cdots \times [-DX^{-k},DX^{-k}].
\]
Then one finds that
\begin{equation}\label{5.3}
T_s^*(\grP)=\grS(L)\grT(X;L),
\end{equation}
where
\begin{equation}\label{5.4}
\grT(X;D)=\int_{\Ome(X;D)}\prod_{i=1}^s I_i(\bftet;X)\d\bftet
\end{equation}
and
\[
\grS(D)=\sum_{1\le q\le D}\sum_{\substack{1\le \bfa\le q\\ (q,\bfa)=1}}q^{-s}
\prod_{i=1}^s S_i(q,\bfa) .
\]

\par We examine the truncated singular integral $\grT(X;D)$ and the truncated singular 
series $\grS(D)$ in turn. In this context, we recall the definitions of the real density 
$\sig_\infty$ and the $p$-adic densities $\sig_p$ from the sequel to the statement of 
Theorem \ref{theorem1.1}. We begin by examining the integrals
\[
I(D)=\int_{\Ome(1;D)}\prod_{i=1}^s I_i(\bftet;1) \d\bftet 
\]
and
\[
I_\infty=\int_{\dbR^k}\prod_{i=1}^s I_i(\bftet ;1)\d\bftet .
\]

\begin{lemma}\label{lemma5.1}
Suppose that $s\ge \tfrac{1}{2}k(k+1)+3$. Then the limit 
$I_\infty=\lim_{D\rightarrow \infty}I(D)$ exists, and one has
\[
\grT(X;L)=I_\infty X^{s-k(k+1)/2}+O(X^{s-k(k+1)/2}L^{-1/k}).
\]
Moreover, one has $I_\infty =\sig_\infty$, and provided that the system \eqref{1.4} has a 
non-singular real solution $\bfz\in \dbR^s$, one has $\sig_\infty>0$.
\end{lemma}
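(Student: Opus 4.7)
The plan has four steps: rescaling, convergence of the singular integral, identification with $\sig_\infty$, and positivity.

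First, I would reduce $\grT(X;D)$ to the rescaled form $X^{s-k(k+1)/2}I(D)$ via the substitution $\phi_j=X^j\tet_j$. In each $I_i(\bftet;X)$, the inner change $\gam=X\eta$ gives $I_i(\bftet;X)=X\cdot I_i(\bfpsi;1)$, where $\bfpsi=(X\tet_1,X^2\tet_2,\ldots,X^k\tet_k)$. Effecting this substitution in the definition \eqref{5.4} of $\grT(X;D)$ transforms the domain $\Ome(X;D)$ into $\Ome(1;D)$ and produces a Jacobian factor $X^{-k(k+1)/2}$, yielding the identity $\grT(X;D)=X^{s-k(k+1)/2}I(D)$.

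Second, I would establish convergence of $I(D)$ with a quantitative tail bound. Standard oscillatory integral estimates based on van der Corput-type arguments (as in \cite[Theorem 7.3]{Vau1997}) deliver suitable decay for $|I_i(\bftet;1)|$ in terms of $\max_j|\tet_j|$ after one accounts for the non-zero scaling by $c_iy_i^{k-j}$. Under the hypothesis $s\ge\tfrac{1}{2}k(k+1)+3$, these bounds are strong enough to ensure absolute integrability of $\prod_i|I_i(\bftet;1)|$ over $\dbR^k$, and a careful decomposition of $\dbR^k$ according to which coordinate is dominant then gives the quantitative estimate $|I_\infty-I(D)|\ll D^{-1/k}$. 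Taking $D=L$ and invoking the identity from the first step delivers the advertised asymptotic for $\grT(X;L)$.

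Third, to identify $I_\infty=\sig_\infty$, I would expand via Fubini
\[
\prod_{i=1}^s I_i(\bftet;1)=\int_{[-1,1]^s}e\Bigl(\sum_{j=1}^k\tet_j L_j(\bfz)\Bigr)d\bfz,
\]
where $L_j(\bfz)=\sum_{i=1}^s c_iy_i^{k-j}z_i^j$. Integrating in $\bftet$ against a smoothed Fourier-inversion kernel (for instance, a product of Fej\'er-type factors) and exchanging the order of integration recognises $I_\infty$ as $\lim_{\eta\to 0^+}(2\eta)^{-k}M_\infty(\eta)=\sig_\infty$, with the absolute integrability of the second step licensing the interchange.

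Finally, for positivity of $\sig_\infty$: given a non-singular real solution $\bfz^*\ne\mathbf{0}$ of \eqref{1.4}, the homogeneity of $L_j$ (of degree $j$ in $\bfz$) allows rescaling $\bfz^*\mapsto\lam\bfz^*$ into the open cube $(-1,1)^s$; the $j$-th row of the Jacobian scales by the non-zero factor $\lam^{j-1}$, so non-singularity is preserved. The implicit function theorem then shows that $L$ is a submersion near $\bfz^*$, so a small neighbourhood of $\bfz^*$ in $[-1,1]^s$ contributes $\gg\eta^k$ to $M_\infty(\eta)$, delivering $\sig_\infty>0$. The main technical obstacle is the quantitative tail bound of the second step, as obtaining the explicit rate $D^{-1/k}$ requires careful decomposition of $\dbR^k$ according to which of the coordinates $|\tet_j|$ dominates the others.
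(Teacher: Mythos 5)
Your overall skeleton matches the paper's — rescale, establish a quantitative tail estimate, identify the real density, then deduce positivity — and the rescaling, identification, and positivity steps are in order (the paper simply cites \cite[\S9]{BW2019} for the identification, whereas your Fourier-inversion sketch is more explicit, and your homogeneity observation for the positivity step is the right idea). However, there is a genuine gap in your second step.

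You assert that the pointwise decay bound $|I_i(\bftet;1)|\ll (1+|\tet_1|+\ldots+|\tet_k|)^{-1/k}$ from \cite[Theorem 7.3]{Vau1997} is ``strong enough to ensure absolute integrability of $\prod_i|I_i(\bftet;1)|$ over $\dbR^k$'' under the hypothesis $s\ge\tfrac{1}{2}k(k+1)+3$, and that a decomposition by dominant coordinate yields the rate $D^{-1/k}$. This does not work. The pointwise bound gives $\prod_i|I_i|\ll(1+|\bftet|)^{-s/k}$, and integrability of that majorant over $\dbR^k$ requires $s/k>k$, i.e.\ $s>k^2$. But $\tfrac{1}{2}k(k+1)+3<k^2$ for every $k\ge4$, so the hypothesis on $s$ is genuinely weaker than what your argument needs; the same deficiency undermines the decomposition-by-dominant-coordinate count. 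What the paper actually does is apply H\"older's inequality to reduce to $\int_{\Ome^{\rm c}(D)}|I_i|^s$, pull out the factor $\bigl(\sup_{\Ome^{\rm c}(D)}|I_i|\bigr)^{s-\tfrac{1}{2}k(k+1)-2}\ll\bigl(D^{-1/k}\bigr)^{s-\tfrac{1}{2}k(k+1)-2}\le D^{-1/k}$, and invoke the absolute convergence of $\int_{\dbR^k}|I(\bftet;1)|^t\,\d\bftet$ for $t>\tfrac{1}{2}k(k+1)+1$ from \cite[Theorem 1.3]{ACK2004}. That moment bound is a substantive input that does not follow from the pointwise decay estimate, and without it your step two does not close.
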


\begin{proof} We consider a parameter $D$ with $D\ge 1$, and we put
\[
\Ome^{\rm c}(D)=\dbR^k\setminus \Ome(1;D).
\]
We begin by applying H\"older's inequality to the mean value complementary to \eqref{5.4}, 
obtaining the bound
\begin{equation}\label{5.5}
\int_{\Ome^{\rm c}(D)}\prod_{i=1}^s|I_i(\bftet;1)|\d\bftet \le \prod_{i=1}^s\biggl( 
\int_{\Ome^{\rm c}(D)}|I_i(\bftet;1)|^s\d\bftet \biggr)^{1/s}.
\end{equation}
Recall the bound
\[
I(\bftet;1)\ll (1+|\tet_1|+\ldots +|\tet_k|)^{-1/k},
\]
available from \cite[Theorem 7.3]{Vau1997}. When $\bftet\in \Ome^{\rm c}(D)$, one has 
$|\tet_i|>D$ for some index $i$ with $1\le i\le k$. Then in view of the definition \eqref{2.1} 
of $\bfbet (\bftet;y_i)$, we have
\[
\sup_{\bftet \in \Ome^{\rm c}(D)}|I_i(\bftet;1)|\ll_\bfy D^{-1/k}.
\]
Thus, leaving the dependence of $\bfy$ implicit in Vinogradov's notation henceforth, we 
deduce that
\begin{equation}\label{5.6}
\int_{\Ome^{\rm c}(D)}|I_i(\bftet;1)|^s\d\bftet \ll (D^{-1/k})^{s-\tfrac{1}{2}k(k+1)-2}
\int_{\dbR^k}|I_i(\bftet;1)|^{\tfrac{1}{2}k(k+1)+2}\d\bftet .
\end{equation}

\par When $t>\tfrac{1}{2}k(k+1)+1$, the integral
\[
\int_{\dbR^k}|I(\bftet;1)|^t\d\bftet
\]
converges absolutely (see \cite[Theorem 1.3]{ACK2004}). Thus, one finds by a change of 
variable that
\[
\int_{\dbR^k}|I_i(\bftet;1)|^t\d\bftet \ll \int_{\dbR^k}|I(\bftet ;1)|^t\d\bftet \ll 1.
\]
By substituting this bound first into \eqref{5.6} and then into \eqref{5.5}, we obtain the 
estimate
\[
\int_{\Ome^{\rm c}(D)}\prod_{i=1}^s|I_i(\bftet ;1)|\d\bftet \ll D^{-1/k}.
\]
It therefore follows that the integral $I_\infty$ converges absolutely, and further that one 
has $I_\infty -I(D)\ll D^{-1/k}$. Moreover, by two changes of variable, we are led from 
\eqref{5.4} to the relation
\begin{align}
\grT(X;L)&=X^{s-k(k+1)/2}\int_{\Ome (1;L)}\prod_{i=1}^s I_i(\bftet;1)\d\bftet \notag\\
&=X^{s-k(k+1)/2}\left( I_\infty +O(L^{-1/k})\right) .\label{5.7}
\end{align}

\par At this point we recall the definitions of the quantities $M_\infty (\eta)$ and 
$\sig_\infty$, defined in the sequel to the statement of Theorem \ref{theorem1.1}. Since 
the singular integral $I_\infty$ converges absolutely, it follows from the argument of 
\cite[\S9]{BW2019} that
\[
I_\infty =\lim_{\eta\rightarrow 0+}(2\eta)^{-k}M_\infty (\eta)=\sig_\infty .
\]
It is apparent, moreover, that whenever the system of equations \eqref{1.4} has a 
non-singular real solution, then one has $M_\infty (\eta)\gg \eta^k$, and hence 
$\sig_\infty>0$. In view of the conclusion \eqref{5.7} already obtained, the proof of the 
lemma is complete.
\end{proof}

Before discussing the singular series
\[
\grS=\lim_{D\rightarrow \infty}\grS(D),
\]
we introduce the quantity
\[
A(q)=\sum_{\substack{1\le \bfa\le q\\ (q,\bfa)=1}} q^{-s}\prod_{i=1}^sS_i(q,\bfa).
\]
Thus, one has
\[
\grS(D)=\sum_{1\le q\le D}A(q),
\]
and the singular series is given by the infinite sum
\begin{equation}\label{5.8}
\grS=\sum_{q=1}^\infty A(q).
\end{equation}

\begin{lemma}\label{lemma5.2}
Suppose that $s\ge k(k+1)$. Then the singular series $\grS$ converges absolutely. 
Moreover, for each prime number $p$, the limit $\sig_p$ exists, the product over all primes 
$\prod_p \sig_p$ converges absolutely, and one has $\grS=\prod_p \sig_p$. Moreover, one 
has
\begin{equation}\label{5.9}
\grS(L)=\prod_p \sig_p+O(L^{-1-1/(3k)}),
\end{equation}
and provided that the system \eqref{1.4} has a non-singular $p$-adic solution for each 
prime number $p$, one has $\prod_p \sig_p\gg 1$.
\end{lemma}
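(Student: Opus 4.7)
The plan is to establish multiplicativity of $A(q)$, bound it pointwise via Weyl-sum mean values supplied by Vinogradov's mean value theorem, extract the Euler product factorisation, and conclude by Hensel lifting. A standard Chinese Remainder Theorem argument applied to the exponential sums $S_i(q,\bfa)$ shows that $A(q)$ is multiplicative in $q$, so once absolute convergence of $\sum_q|A(q)|$ is established, the factorisation $\grS=\prod_p\bigl(\sum_{h\ge 0}A(p^h)\bigr)$ follows automatically.

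The central technical step is to show that $|A(q)|\ll q^{-k(k-1)/2+\eps}$. Writing $S_i(q,\bfa)=S(q,c_i\bfbet(\bfa;y_i))$, whose $j$th coefficient is $c_iy_i^{k-j}a_j$, any prime divisor of $(q,c_i\bfbet(\bfa;y_i))$ not dividing $c_iy_i$ must also divide $(q,\bfa)$. Thus, apart from a bounded set of exceptional primes depending only on $\bfc$ and $\bfy$, one may invoke the mean value
\[
\sum_{\substack{1\le \bfa\le q\\ (q,\bfa)=1}}|S(q,\bfa)|^{2t}\ll q^{2t+k-k(k+1)/2+\eps}\qquad (2t\ge k(k+1)),
\]
which follows from orthogonality together with the (now confirmed) main conjecture of Vinogradov's mean value theorem applied modulo $q$. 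An application of H\"older's inequality across the $s$ factors defining $A(q)$, combined with this bound at $2t=s$, then delivers $|A(q)|\ll q^{k-k(k+1)/2+\eps}=q^{-k(k-1)/2+\eps}$; the contribution of the bad primes amounts only to a finite Euler factor, absorbed into the implicit constant.

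Since $k(k-1)/2\ge 3$ for $k\ge 3$, the tail estimate $\sum_{q>L}|A(q)|\ll L^{-(k(k-1)/2-1)+\eps}$ comfortably implies both absolute convergence of $\grS$ and the error term $O(L^{-1-1/(3k)})$ claimed in \eqref{5.9}. The identification $\sum_{h\ge 0}A(p^h)=\sig_p$ proceeds by writing $M_p(h)$ through orthogonality over the additive characters of $(p^{-h}\dbZ/\dbZ)^k$, rearranging to recover the definition of $A(q)$, and passing to the limit $h\to\infty$, a step justified by the absolute convergence just obtained. Convergence of the Euler product $\prod_p\sig_p$ and its equality with $\grS$ then follow at once from multiplicativity.

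For positivity, a non-singular $p$-adic solution of \eqref{1.4} lifts via Hensel's lemma to provide $M_p(h)\gg p^{h(s-k)}$ for every $h$, securing $\sig_p>0$ for each such prime. For every $p$ exceeding a threshold depending only on $\bfc$ and $\bfy$, the pointwise bound above forces $\sig_p=1+O(p^{-1-1/(3k)+\eps})$, so the infinite product converges to a strictly positive real number. The principal obstacle throughout is to establish the Weyl-sum mean value cleanly in the presence of the weights $c_iy_i^{k-j}$ which obstruct outright primitivity; once the finitely many exceptional primes are isolated and the Hensel lifting is carried out, the remainder of the argument reduces to routine circle-method bookkeeping.
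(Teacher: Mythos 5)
Your overall architecture---multiplicativity of $A(q)$, a decay bound on $A(q)$, the Euler product identification, and Hensel lifting for positivity---matches the standard template, and several of those pieces are the same as the paper's. The multiplicativity step, the identification of $\sum_{h\ge 0}A(p^h)$ with the $p$-adic density $\sig_p$, and the Hensel argument for $\sig_p>0$ are all fine and essentially identical to what the paper does.

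The gap is in your key quantitative claim. You assert the pointwise bound
\[
\sum_{\substack{1\le\bfa\le q\\ (q,\bfa)=1}}|S(q,\bfa)|^{2t}\ll q^{2t+k-k(k+1)/2+\eps}\qquad (2t\ge k(k+1)),
\]
and attribute it to ``orthogonality together with the main conjecture of Vinogradov's mean value theorem applied modulo~$q$.'' This justification does not work as stated. By orthogonality, $\sum_{1\le\bfa\le q}|S(q,\bfa)|^{2t}=q^k N_q(t)$ where $N_q(t)$ counts pairs $(\bfx,\bfy)\in[1,q]^{2t}$ satisfying the Vinogradov system \emph{modulo $q$}, not over $\dbZ$ in a box; the main conjecture controls the latter, not the former, and there is no ``VMVT mod~$q$'' that follows directly from it. Heuristically $N_q(t)\approx q^{2t-k}$, so $q^kN_q(t)\approx q^{2t}$, and the savings you need come entirely from the restriction to primitive $\bfa$ and the internal structure of $S(q,\bfa)$ at prime powers. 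At a prime $q=p$ the Weil bound $|S(p,\bfa)|\le (k-1)p^{1/2}$ for $(p,\bfa)=1$, $p>k$, does deliver the required estimate, but for prime powers $p^l$ with $l\ge 2$ the best pointwise Weyl-type bound $|S(p^l,\bfa)|\ll p^{l(1-1/k)}$ gives $\sum_{(p^l,\bfa)=1}|S(p^l,\bfa)|^{2t}\ll p^{l(2t(1-1/k)+k)}$, and for $2t=k(k+1)$ with $k\ge 3$ this exponent \emph{exceeds} $l(2t+k-k(k+1)/2)$; the needed cancellation at prime powers requires a count of how often $|S(p^l,\bfa)|$ is near its maximum, which your sketch does not supply. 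The paper sidesteps this by extracting only a modest factor $q^{-(s-t)/k}$ via the weaker but universally valid Weyl estimate $|S(q,\bfa)|\ll q^{1-1/k+\eps}$ on $s-t$ of the $s$ factors, then invoking the (nontrivial) $p$-adic convergence result \cite[Theorem 2.4]{ACK2004} on the remaining $t$ factors via H\"older. This yields the weaker tail $\sum_{q\ge D}|A(q)|\ll D^{-1-1/(3k)}$, which is still ample for \eqref{5.9}. To repair your argument you would either need to supply the prime-power analysis of Arkhipov--Chubarikov--Karatsuba directly, or replace the pointwise claim with the paper's averaged bound.

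Your remark about isolating finitely many ``bad'' primes dividing the $c_iy_i$ is a correct and necessary observation (the paper handles this implicitly by absorbing the dependence on $\bfy$ into the implicit constant), and the rest of the bookkeeping---Euler product convergence from $\sig_p=1+O(p^{-1-1/(3k)})$ and positivity via Hensel---is sound.
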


\begin{proof} We may suppose that $s\ge k(k+1)$. Put 
$t=\tfrac{1}{2}k(k+1)+\tfrac{5}{2}$. Then, in view of our assumption throughout that 
$k\ge 3$, one sees that
\[
s-t\ge \tfrac{1}{2}k(k+1)-\tfrac{5}{2}\ge k+\tfrac{1}{2},
\]
and in particular $\tfrac{1}{2}k(k+1)+2<t<s$. From \cite[Theorem 7.1]{Vau1997}, we 
find that when $(q,\bfa)=1$ one has $S(q,\bfa)\ll q^{1-1/k+\eps}$. Hence, we deduce 
that
\[
S_i(q,\bfa)\ll_\bfy q^{1-1/k+\eps}.
\]
Again suppressing the implicit dependence on $\bfy$ in Vinogradov's notation, an application 
of H\"older's inequality reveals that
\begin{align*}
A(q)&\le \prod_{i=1}^s \Biggl( \sum_{\substack{1\le \bfa\le q\\ (q,\bfa)=1}}q^{-s}
|S_i(q,\bfa)|^s\Biggr)^{1/s}\\
&\ll q^{\eps-(s-t)/k}\prod_{i=1}^s \Biggl( 
\sum_{\substack{1\le \bfa\le q\\ (q,\bfa)=1}}q^{-t}|S_i(q,\bfa)|^t\Biggr)^{1/s}.
\end{align*}
Since we have arranged parameters so that $s-t\ge k+\tfrac{1}{2}$, we find by means of 
H\"older's inequality that
\begin{equation}\label{5.10}
\sum_{q\ge D}|A(q)|\ll D^{-1-1/(3k)}\prod_{i=1}^s \Biggl( \sum_{q\ge D}
\sum_{\substack{1\le \bfa\le q\\ (q,\bfa)=1}}q^{-t}|S_i(q,\bfa)|^t\Biggr)^{1/s}.
\end{equation}

\par A change of variable supplies the estimate
\[
\sum_{q\ge D}\sum_{\substack{1\le \bfa\le q\\ (q,\bfa)=1}}q^{-t}|S_i(q,\bfa)|^t\ll 
\sum_{q=1}^\infty \sum_{\substack{1\le \bfa\le q\\ (q,\bfa)=1}}q^{-t}|S(q,\bfa)|^t.
\]
By reference to \cite[Theorem 2.4]{ACK2004}, the sum on the right hand side here is 
absolutely convergent for $t>\frac{1}{2}k(k+1)+2$. We therefore derive from \eqref{5.10} 
the upper bound
\begin{equation}\label{5.11}
\sum_{q\ge D}|A(q)|\ll D^{-1-1/(3k)},
\end{equation}
and thus the singular series \eqref{5.8} is absolutely convergent, and one has
\begin{equation}\label{5.12}
\grS-\grS(L)\ll L^{-1-1/(3k)}.
\end{equation}
The standard theory of singular series shows that the function $A(q)$ is a multiplicative 
function of $q$ (see \cite[\S2.6]{Vau1997} for the necessary ideas). Moreover, since 
\eqref{5.11} shows that, for each prime number $p$, one has
\[
\sum_{h\ge H}|A(p^h)|\ll p^{-H(1+1/(3k))},
\]
we see that the limit
\[
\lim_{H\rightarrow \infty}\sum_{h=0}^H A(p^h)
\]
exists, and that the infinite sum
\[
A_p=\sum_{h=0}^\infty A(p^h)
\]
is absolutely convergent with $A_p=1+O(p^{-1-1/(3k)})$. Thus the infinite product 
$\prod_p A_p$ is absolutely convergent and $\grS=\prod_p A_p$. In particular, we deduce 
from \eqref{5.12} that
\begin{equation}\label{5.13}
\grS(L)-\prod_p A_p\ll L^{-1-1/(3k)}.
\end{equation}

\par Once again applying the standard theory of singular series, moreover, one has
\[
\sum_{h=0}^HA(p^h)=p^{H(k-s)}M_p(H),
\]
where $M_p(H)$ denotes the number of solutions of the system
\[
\sum_{i=1}^sc_iy_i^{k-j}z_i^j\equiv 0\mmod{p^H}\quad (1\le j\le k),
\]
with $1\le \bfz\le p^H$. Thus we find that $A_p$ is equal to the $p$-adic density $\sig_p$ 
defined in the sequel to the statement of Theorem \ref{theorem1.1}. We are at liberty to 
assume that the system of equations \eqref{1.4} has a non-singular $p$-adic solution for 
each prime $p$. It therefore follows via Hensel's lemma that there is a non-negative integer 
$\nu_p$ satisfying the property that, whenever $H\ge \nu_p$, one has
\[
M_p(H)\ge p^{(H-\nu_p)(s-k)},
\]
whence
\[
\sig_p=\lim_{H\rightarrow \infty}p^{H(k-s)}M_p(H)\ge p^{-(s-k)\nu_p}>0.
\]
Then, on recalling that $\sig_p=1+O(p^{-1-1/(3k)})$, we find that there is a positive 
integer $p_0$ with the property that
\[
\grS=\prod_p\sig_p\gg \prod_{p>p_0}(1-p^{-1-1/(4k)})\gg 1, 
\]
whilst at the same time $\grS\ll 1$. The proof of the lemma is completed on noting that 
since $A_p=\sig_p$, the relation \eqref{5.13} yields the asymptotic relation \eqref{5.9}.
\end{proof}

We are now equipped to complete the asymptotic analysis of the major arc contribution 
$T_s(\grP)$.

\begin{lemma}\label{lemma5.3}
Suppose that $s\ge k(k+1)$, and the system \eqref{1.4} has a non-singular real solution, 
and a non-singular $p$-adic solution for each prime $p$. Then one has
\[
T_s(\grP)=\sig_\infty \Bigl( \prod_p \sig_p\Bigr) X^{s-k(k+1)/2}+o(X^{s-k(k+1)/2}),
\]
in which the product over real and $p$-adic densities is positive.
\end{lemma}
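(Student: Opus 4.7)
The plan is to assemble Lemma \ref{lemma5.3} directly from the pieces already in place, treating it as a routine combination rather than a fresh argument. First I would invoke \eqref{5.2}, which gives
\[
T_s(\grP)=T_s^*(\grP)+O(X^{s-k(k+1)/2}L^{-1}),
\]
and then use the factorisation \eqref{5.3}, $T_s^*(\grP)=\grS(L)\grT(X;L)$, to reduce the problem to multiplying the two asymptotic formulae provided by Lemmata \ref{lemma5.1} and \ref{lemma5.2}. Since $s\ge k(k+1)\ge 12$ when $k\ge 3$, the hypothesis $s\ge \tfrac{1}{2}k(k+1)+3$ needed for Lemma \ref{lemma5.1} is automatic, so both lemmata apply.

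Next I would substitute. By Lemma \ref{lemma5.2} one has $\grS(L)=\prod_p\sig_p+O(L^{-1-1/(3k)})$ and $\grS(L)\ll 1$, while by Lemma \ref{lemma5.1} one has $\grT(X;L)=\sig_\infty X^{s-k(k+1)/2}+O(X^{s-k(k+1)/2}L^{-1/k})$ and $\grT(X;L)\ll X^{s-k(k+1)/2}$. Multiplying,
\[
T_s^*(\grP)=\sig_\infty\Bigl(\prod_p\sig_p\Bigr)X^{s-k(k+1)/2}+O\!\left(X^{s-k(k+1)/2}L^{-1/k}\right),
\]
where the cross terms involving $L^{-1-1/(3k)}\cdot X^{s-k(k+1)/2}$ and $\prod_p\sig_p\cdot X^{s-k(k+1)/2}L^{-1/k}$ are both absorbed into the displayed error since $\prod_p\sig_p\ll 1$. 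Combining this with \eqref{5.2} and recalling $L=X^{1/(8k^2)}$, the total error is $O(X^{s-k(k+1)/2}L^{-1/k})=o(X^{s-k(k+1)/2})$, which yields the stated asymptotic for $T_s(\grP)$.

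Finally, the positivity assertion follows from the non-singularity hypotheses: the real non-singular solvability of \eqref{1.4} feeds into Lemma \ref{lemma5.1} to give $\sig_\infty>0$, while the $p$-adic non-singular solvability feeds into Lemma \ref{lemma5.2} to give $\prod_p\sig_p\gg 1$. Since both factors are positive, so is their product, completing the proof. There is no substantive obstacle here; the only care needed is bookkeeping of error terms to confirm that the dominant error is $X^{s-k(k+1)/2}L^{-1/k}$, which is indeed $o(X^{s-k(k+1)/2})$ because $L\to\infty$ with $X$.
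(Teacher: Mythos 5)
Your proof is correct and follows essentially the same route as the paper: combine \eqref{5.2} and \eqref{5.3}, substitute the asymptotic formulae from Lemmata \ref{lemma5.1} and \ref{lemma5.2}, multiply out, and deduce positivity from the non-singularity hypotheses feeding into those two lemmata. The only (harmless) additions are your explicit check that $s\ge\tfrac12 k(k+1)+3$ holds automatically and the remark that the dominant error is $L^{-1/k}$, both of which are implicit in the paper's argument.
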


\begin{proof} By substituting the conclusions of Lemmata \ref{lemma5.1} and 
\ref{lemma5.2} into \eqref{5.3}, we find that
\begin{align*}
T_s^*(\grP)&=\left( \grS+O(L^{-1-1/(3k)})\right) \left( \sig_\infty X^{s-k(k+1)/2}+
O(X^{s-k(k+1)/2}L^{-1/k})\right) \\
&=\sig_\infty \Bigl( \prod_p \sig_p\Bigr) X^{s-k(k+1)/2}+O(X^{s-k(k+1)/2}L^{-1/k}).
\end{align*}
Moreover, the product over real and $p$-adic densities here in the leading asymptotic term 
is positive. We therefore conclude from \eqref{5.2} that
\[
T_s(\grP)=\sig_\infty \Bigl( \prod_p \sig_p\Bigr) X^{s-k(k+1)/2}+
O(X^{s-k(k+1)/2}L^{-1/k}),
\]
and the proof of the lemma is complete.
\end{proof}

\section{The proof of Theorems \ref{theorem1.1} and \ref{theorem1.2}}
The completion of the proofs of our main theorems is now at hand, though we defer to the 
next section a consideration of the nature of the singularities of the system \eqref{1.4}.

\begin{proof}[The proof of Theorem \ref{theorem1.1}] On recalling \eqref{5.1}, we find 
that when $s\ge k(k+1)$ and $c_iy_i\ne 0$ $(1\le i\le s)$, one has
\begin{equation}\label{6.1}
N_{s,k}(X;\bfy)=T_s(\grP)+o(X^{s-k(k+1)/2}).
\end{equation}
The hypotheses of Theorem \ref{theorem1.1} permit us to assume that the system 
\eqref{1.4} possesses non-singular real and $p$-adic solutions, for each prime number $p$. 
Thus, we deduce from Lemma \ref{lemma5.3} that
\[
T_s(\grP)=\calC_{s,k}(\bfy)X^{s-k(k+1)/2}+o(X^{s-k(k+1)/2}),
\]
where $\calC_{s,k}(\bfy)=\sig_\infty \prod_p \sig_p>0$. The conclusion of Theorem 
\ref{theorem1.1} now follows by substituting this asymptotic relation into \eqref{6.1}.
\end{proof}

\begin{proof}[The proof of Theorem \ref{theorem1.2}] The proof of the upper bound 
\eqref{1.8} has already been accomplished in Lemma \ref{lemma2.2} and the discussion 
following the latter. Turning now to the proof of the upper bounds \eqref{1.9} and 
\eqref{1.10}, suppose that $k\ge 3$ and $s\ge k(k+1)$. We set $y_j=1$ for $1\le j\le s$ 
and put $n=c_1+\ldots +c_s\ne 0$. In this scenario, we find that \eqref{3.3} delivers the 
estimate
\begin{equation}\label{6.2}
\int_\grp f_k(c_1\bfalp;X)\cdots f_k(c_s\bfalp;X)\d\bfalp =\sum_{i=1}^3 T_s(\grW_i),
\end{equation}
where, by virtue of Lemmata \ref{lemma3.1}, \ref{lemma4.2} and \ref{lemma4.4},
\begin{equation}\label{6.3}
\sum_{i=1}^3 T_s(\grW_i)\ll X^{s-\tfrac{1}{2}k(k+1)-1/(8k^3)}.
\end{equation}
When $s=k(k+1)$, the right hand side here is $O(X^{(s-\del)/2})$, where $\del=1/(4k^3)$, 
and when $s>k(k+1)$, it is instead $O(X^{s-\tfrac{1}{2}k(k+1)-\tfrac{1}{2}\del})$. In 
either case, therefore, the upper bounds \eqref{1.9} and \eqref{1.10} follow by substituting 
\eqref{6.3} into \eqref{6.2}.
\end{proof}

\section{The non-singularity of non-zero solutions}
Suppose that the system of equations \eqref{1.4} has a non-zero solution 
$\bfz\ne {\mathbf 0}$ lying in either $\dbR^s$ or $\dbQ_p^s$, for a given prime $p$. Our 
goal in this section is to show that this solution is in fact non-singular under the conditions 
discussed in the sequel to the statement of Theorem \ref{theorem1.1}. We assume 
throughout that  the equation \eqref{1.2}, with $n\ne 0$ and $c_i\ne 0$ $(1\le i\le s)$, has 
a solution $\bfy$ with $y_i\ne 0$ $(1\le i\le s)$. Then, should the system \eqref{1.4} have a 
non-zero solution $\bfz$ over $\dbR$, or over $\dbQ_p$, we find that $\bfz$ satisfies the 
system of equations
\begin{equation}\label{7.1}
\sum_{i=1}^s c_iy_i^k\left( z_i/y_i\right)^j=0\quad (1\le j\le k).
\end{equation}
Suppose, by way of deriving a contradiction, that this solution $\bfz$ is singular. Then, for 
any $k$-tuple $(i_1,\ldots ,i_k)$ of natural numbers satisfying 
$1\le i_1<i_2<\ldots <i_k\le s$, one must have
\begin{equation}\label{7.2}
\det \left( jc_{i_l}y_{i_l}^{k-j}z_{i_l}^{j-1}\right)_{1\le j,l\le k}=0.
\end{equation}
Since $c_iy_i\ne 0$ for $1\le i\le s$, a consideration of Vandermonde determinants reveals 
that the condition \eqref{7.2} is satisfied if and only if
\[
0=\det \left( \left( \frac{z_{i_l}}{y_{i_l}}\right)^{j-1}\right)_{1\le j,l\le k}=
\prod_{1\le j<l\le k}\left( \frac{z_{i_l}}{y_{i_l}}-\frac{z_{i_j}}{y_{i_j}}\right) .
\]
This relation implies that
\[
\frac{z_{i_l}}{y_{i_l}}=\frac{z_{i_j}}{y_{i_j}},
\]
for some indices $j$ and $l$ with $1\le j<l\le k$, and thus we are forced to conclude that 
the set $\left\{ z_i/y_i : 1\le i\le s\right\}$ contains at most $k-1$ distinct values.\par

By relabelling indices, we may suppose that, for some integer $r$ with $1\le r\le k-1$, each 
of the rational numbers
\[
\frac{z_1}{y_1},\frac{z_2}{y_2},\ldots ,\frac{z_r}{y_r},
\]
is distinct, and further that, whenever $i>r$, one has
\[
\frac{z_i}{y_i}\in \left\{ \frac{z_1}{y_1},\ldots ,\frac{z_r}{y_r}\right\} .
\]
We define an equivalence relation on indices by defining $i\sim j$ whenever one has 
$z_i/y_i=z_j/y_j$. Then, on putting
\[
C_i=\sum_{\substack{1\le j\le s\\ j\sim i}}c_j\frac{y_j^k}{y_i^k}\quad (1\le i\le r),
\]
we see that the equation \eqref{1.2} becomes
\begin{equation}\label{7.3}
C_1y_1^k+\ldots +C_ry_r^k=n,
\end{equation}
while the equations \eqref{7.1} transform into the new system
\begin{equation}\label{7.4}
\sum_{i=1}^rC_iy_i^k\Big( \frac{z_i}{y_i}\Bigr)^j=0\quad (1\le j\le k),
\end{equation}
subject to the condition
\begin{equation}\label{7.5}
\frac{z_i}{y_i}\ne \frac{z_l}{y_l}\quad (1\le i<l\le r).
\end{equation}
Notice here that since $n\ne 0$, it follows from the equation \eqref{7.3} that 
$C_iy_i^k\ne 0$ for some index $i$ with $1\le i\le r$. Moreover, since 
$(z_1,\ldots ,z_k)\ne {\mathbf 0}$, the relation \eqref{7.5} ensures that $z_i=0$ for at most 
one index $i$ with $1\le i\le r$, and in such circumstances one must have $r\ge 2$.\par

Should the solution $\bfy$ of \eqref{1.2} satisfy the condition that there be no vanishing 
subsums, then $C_i\ne 0$ for $1\le i\le r$. We suppose either that such is the case and 
$\bfz\ne {\mathbf 0}$, or else that $z_i\ne 0$ for $1\le i\le s$. In both circumstances we 
relabel indices in such a manner that $C_iz_i\ne 0$ for $1\le i\le R$, and $C_iz_i=0$ for 
$R<i\le r$. Here, in either scenario, our discussion thus far permits us the assumption that 
$1\le R<k$. We now infer from the system of equations \eqref{7.4} that
\[
\sum_{i=1}^R C_iy_i^k\Bigl( \frac{z_i}{y_i}\Bigr)^j=0\quad (1\le j\le R).
\]
We view these relations as a system of linear equations, with the quantities $C_iy_i^k$ 
$(1\le i\le R)$ as variables. Then since in either scenario under consideration, we have 
$C_iy_i^k\ne 0$ for all indices $i$ with $1\le i\le R$, we see that
\[
\det \biggl( \left( \frac{z_i}{y_i}\right)^j\biggr)_{1\le i,j\le R}=0.
\]
Expanding the Vandermonde determinant, we thus conclude that
\[
\biggl( \prod_{l=1}^R \frac{z_l}{y_l}\biggr) \prod_{1\le i<j\le R}\left( 
\frac{z_i}{y_i}-\frac{z_j}{y_j}\right) =0.
\]
But the hypothesis \eqref{7.5} ensures that the second product on the left hand side is 
non-zero, and our hypothesis $C_iz_i\ne 0$ for $1\le i\le R$ ensures that the first product on 
the left hand side is non-zero. We therefore arrive at a contradiction, so that the solution 
$\bfz$ cannot in fact be singular. The conditions in the sequel to the statement of Theorem 
\ref{theorem1.1} consequently suffice to guarantee the existence of non-singular real and 
$p$-adic solutions, as we had claimed.

\providecommand{\bysame}{\leavevmode\hbox to3em{\hrulefill}\thinspace}

\end{document}